\newcommand{\R}{\mathbb{R}}
\newcommand{\N}{\mathbb{N}}
\newcommand{\Z}{\mathbb{Z}}
\newtheorem{proposition}{Proposition}[section]\surroundwithmdframed[style=thm]{proposition}
\newtheorem{theorem}{Theorem}[section]\surroundwithmdframed[style=thm]{theorem}
\newtheorem{lemma}{Lemma}[section]\surroundwithmdframed[style=thm]{lemma}
\newtheorem{problem}{Problem}[section]\surroundwithmdframed[style=thm]{problem}
\newtheorem*{problem*}{Problem}\surroundwithmdframed[style=thm]{problem*}
\theoremstyle{remark}
\newtheorem*{note}{\textbf{Remark}}
\title{Adjoint Method in PDE-based Image Compression}
\author{
  Zakaria BELHACHMI \\ 
  IRIMAS, University of Haute-Alsace, France \\
  \texttt{zakaria.belhachmi@uha.fr} \\
   \And
  Thomas JACUMIN \\
  LTH, University of Lund, Sweden \\
  \texttt{thomas.jacumin@math.lth.se} \\
}
\begin{document}

\maketitle

\begin{abstract}

We consider a shape optimization  based method for finding the best interpolation data in the compression of images with noise. The aim is to reconstruct missing regions by means of minimizing a data fitting term in an $L^p$-norm between original images and their reconstructed counterparts using linear diffusion PDE-based inpainting. Reformulating the problem as a constrained optimization over sets (shapes), we derive the topological asymptotic expansion of the considered shape functionals with respect to the insertion of small ball (a single pixel) using the adjoint method. Based on the achieved distributed topological shape derivatives, we propose a numerical approach to determine the optimal set and present numerical experiments showing the efficiency of our method. Numerical computations are presented that confirm the usefulness of our theoretical findings for PDE-based image compression.
\end{abstract}

\keywords{image compression \and shape optimization \and adjoint method \and image interpolation \and inpainting \and PDEs \and image denoising}

\section*{Introduction and Related Works}

PDE-based methods have attracted growing interest by researchers and engineers in image analysis field 
during the last decades \cite{Perona1990, Catte1992, Weickert1996, Masnou1998, Weickert1999, Bertalmio2000, Scherzer2008, Lenzen2011, Larnier2012, Adam2017}.
Actually, such methods have reached their maturity both from the point of view of modeling and scientific computing allowing them to be used in modern image technologies and their various applications. Image compression is one of the domain where they appear among the state-of-the-art methods \cite{Chan2001, Galic2008, Schmaltz2009, Bae2010, Peter2015, Andris2016, Mohideen2020}. In fact, the aim for such problems is to store few pixels of a given image (coding phase) and to recover/restore  
the missing part in an accurate way (decoding). The PDE-based methods use a diffusion differential operator for the inpainting of missed parts 
from an available data (boundary or small parts of the initial image) therefore their efficiency for decoding is guaranteed/encoded in the operator without any pre- or post-treatment. The question then is how to ensure with these methods a good choice, if it exists, of the ``best'' pixels to store for high quality reconstruction of the entire image?
An answer to this question is given in \cite{Belhachmi2009, Belhachmi2022Jun} for the harmonic or the heat equation, where its reformulation as a constrained (shape) optimisation problem permitted to exhibit an optimal set of pixels to do the job. In addition, analytic selection criteria using topological asymptotics were derived. Due to the simple structure of the shape functionals considered in these previous works, the topological expansion is easily derived (more or less with formal computations) and gives an analytic criterion to characterize the optimal set in compression. 
The limitation in obtaining the topological expansion this way is twofold : the criterion gives pointwise information on the importance of the location (pixel) to store which results in hard thresholding selection strategy not robust with respect to the noise. Second, the technique is limited to simple functionals, namely an $L^2$ data-fitting term and a linear diffusion operator.

The main contribution of this article is the use of the adjoint method \cite{Amstutz2006}-\cite{Garreau2001} to derive a soft analytic criterion for PDE-based compression. Though we restrict ourselves to second order linear inpainting, the method applies without significant changes to more general elliptic operators and as we show in the article to several types of noise. 

In fact, we consider the compression problem in the same framework than \cite{Belhachmi2009}, but we introduce a new approach to the characterization of the set of pixels to select using the adjoint method \cite{Guillaume2002, Garreau2001, Amstutz2006, Belhachmi2018}. This approach to obtain the topological expansion is more general than the one previously studied for the same problem in \cite{Belhachmi2009}, in the sense that it may be used for other diffusion operators and nonlinear data-fitting term, moreover it allows a better stability with respect to noise. In particular, when the accuracy of the reconstruction (fidelity term) is measured with an $L^p$-norm, $p>1$ and $p\not=2$, the adjoint method is still linear and no significant complexity or cost are added. Thus, the main results in the article include the rigorous derivation of the topological expansion based on the adjoint problem in the spirit of \cite{Amstutz2006}-\cite{Garreau2001}. 
We notice that the Dirichlet boundary condition in the inclusion prevents from a direct transposition of the method based on a local perturbation of the material properties by inserting small holes. Therefore, we adapt the sensibility analysis to the problem under consideration and we perform the asymptotic expansion of the proposed shape functional using non-standard perturbation techniques combined with truncation techniques. The asymptotic allows us to deduce a gradient
algorithm for the reconstruction that we implement and compare to previous works \cite{Belhachmi2009, Belhachmi2022Jun}.

The article is organized as follows : in Section \ref{sec:problem-formulation}, we introduce the compression problem that takes the form of a constrained optimization problem of finding the best set of pixels to store, denoted $K$. Section \ref{sec:topological-derivatives} is devoted to describe the adjoint method to compute the topological derivative of the cost functional considered. In Section \ref{sec:summary-numerical}, we perform the computations to obtain the topological expansion and the ``shape'' derivatives which involve the direct and adjoint states. Finally, in Section \ref{sec:numerical-results}, we describe the resulting algorithm and we give some numerical results to confirm the usefulness of the theory. Some of the technical proofs and auxiliary estimates are given in appendices for ease of readability. 


\section{Problem Formulation}
\label{sec:problem-formulation}

Let $D\subset\R^2$ and $f: D\to\R^d$, $d\geq 1$ a given image in some region $K\subset\subset D$. We consider the mixed elliptic boundary problem for a  given $u_0$ in $L^2(D)$, \\

\begin{problem} Find $u$ in $H^1(D)$ such that
	\begin{equation}
		\left\{\begin{array}{rl}
			u - \alpha \Delta u = u_0, & \text{in}\ D\setminus K, \\
			u = f, & \text{in}\ K, \\
			\frac{\partial u}{\partial \mathbf{n}} = 0, & \text{on}\ \partial D. \\
		\end{array}\right .\label{eq:problem_1}
	\end{equation}
	\label{pb:problem_1}
\end{problem}
where the available data $f$ is a Dirichlet ``boundary'' condition and with homogeneous Neumann boundary condition on $\partial D$. This PDE corresponds to the first term in the time discretization of the homogeneous heat equation, where we assume that the initial condition is $u_0$. For compatibility condition with the ``boundary'' data on $K$, we take as $u_0$ the image $f\in H^1(D)$, with $\Delta f\in L^2(D)$ and such that $\frac{\partial f}{\partial n}=0$ on $\partial D$.
In the compression step (coding phase), the datum $f$ is available in the entire domain $D$, so we can set the initial condition $u_0$ to the function $f$. The result of this coding step consists of a set $K$ of the pixels to store and the values of $f$ on $K$. In the decompression step (decoding phase), the data is only available in the subset $K$ of the domain, so we set $u_0=0$ (at least in $D\setminus K$). When the reconstruction is performed by solving the heat equation, it means that we start with an initial datum which do not satisfy the compatibility conditions, but 
this does not influence the dynamic as far as the convergence to an equilibrium, that is a steady state, holds (regularizing effect).
Setting $v=u-f$, we can write equivalently \\

\begin{problem} Find $v$ in $H^1(D)$ such that
	\begin{equation}
		\left\{\begin{array}{rl}
			- \alpha \Delta v + v = \alpha \Delta f, & \text{in}\ D\setminus K, \\
			v = 0, & \text{in}\ K, \\
			\frac{\partial v}{\partial \mathbf{n}} = 0, & \text{on}\ \partial D. \\
		\end{array}\right .\label{eq:problem_1:v}
	\end{equation}
	\label{pb:problem_1:v}
\end{problem}
Denoting by $v_K=u_K-f$ the solution of Problem \ref{pb:problem_1:v}, the question is to identify the region $K$ which gives the “best” approximation $u_K$, in a suitable sense, that is to say which minimizes some $L^p$-norm. The constrained optimization problem for the compression reads \cite{Belhachmi2009}, for $p>1$,
\begin{equation}\label{eq1}
	\min_{K\subseteq D,\ m(K)\leq c}\Big\{  \frac{1}{p}\int_D |u_K-f|^p\ dx\ \Big|\ u_K\ \text{solution of Problem}\ \ref{pb:problem_1} \Big\},
\end{equation}
where $m$ is a ``size measure". The optimization problem \eqref{eq1} is studied in \cite{Belhachmi2009} and the existence of an optimal set is established for $p=2$ and $m$ is the capacity of sets \cite{Ziemer1969} and the result extends to $p>1$ as noticed in \cite{Belhachmi2022Jun}. The optimal set $K$ is obtained via a relaxation procedure but its regularity is not considered yet, nevertheless the relaxation technique allows us to derive first order optimality conditions via topological derivatives, which was done in \cite{Belhachmi2009} in the case of the Laplacian as inpainting operator. In this article, we aim to compute the topological gradient \cite{Cea1973, Amstutz2006} of the shape functional using the adjoint method which possesses two main advantages on the previous approaches: it is more general and systematic with respect to the inpainting operator and the exponent $p\geq 1$, on one side and secondly, it leads to a better characterization of the relevant pixels as it gives a distribution of such pixels taking into account local information from their neighborhood. Loosely speaking, to obtain the topological derivative, let $x_0\in D$ and $K_\varepsilon=K\cup \overline{B(x_0,\varepsilon)}$ ($B(x_0,\epsilon)$ denotes the ball centred at $x_0$ with radius $\varepsilon$), then we look for an expansion of the form
\[ J(u_{K_\varepsilon}) - J(u_K) = \rho(\varepsilon)\,G(x_0) + o\big( \rho(\varepsilon) \big). \]
where $\rho$ is a positive function going to zero with $\varepsilon$ and $G$ is the so called topological
gradient \cite{Amstutz2006, Belhachmi2018, Garreau2001}. Therefore, to minimize the cost functional $J$, one has to create small holes
at the locations $x$ where $G(x)$ is the most negative. For the compression problem this amounts to select the locations where the pixels are the most important to keep.

\section{Adjoint Method and variations of the cost functional}
\label{sec:topological-derivatives}

The adjoint method has been extensively studied and succefully applied to a large number of second order elliptic problems and Helmholtz equation (see \cite{Amstutz2006,Garreau2001} and references therin). We will recall the main principle (theorem) of the method and apply it to our specific setting. 
We introduce the following abstract result which describes the adjoint method for the computation of the first variation of a given cost functional (see for instance \cite{Amstutz2006}). Let $V$ be a Hilbert space. For $\varepsilon\in [0,\zeta]$, $\zeta> 0$, we consider a symmetric bilinear form $a_\varepsilon: V\times V\to \R$ and a linear form $l_\varepsilon: V\to \R$ such that the following assumptions are fulfilled 
\begin{itemize}
    \item $|a_\varepsilon(v,w)| \leq M_1\|v\|\|w\|$, $\forall (v,w)\in V\times V$ (\textbf{continuity of the bilinear form}),
    \item $a_\varepsilon(v,v) \geq \xi\|v\|^2$, $\forall v\in V$ (\textbf{uniform coercivity}),
    \item $|l_\varepsilon(w)| \leq M_2\|w\|$, $\forall w\in V$ (\textbf{continuity of the linear form}),
\end{itemize}
with $\alpha,M_1,M_2>0$ independent of $\varepsilon$. Moreover, we suppose that there exists a continuous bilinear form $\delta a: V\times V\to \R$, a continuous linear form $\delta l: V\to \R$ and a function $\rho: \R_+\to \R_+$ such that, for all $\varepsilon\geq 0$,
\begin{itemize}
    \item $\|a_\varepsilon-a_0-\rho(\varepsilon)\,\delta a\|_{\mathcal{L}_2(V)} = o\big(\rho(\varepsilon)\big)$,
    \item $\|l_\varepsilon-l_0-\rho(\varepsilon)\, \delta l\|_{\mathcal{L}(V)} = o\big(\rho(\varepsilon)\big)$,
    \item $\lim_{\varepsilon\to 0} \rho(\varepsilon) = 0$.
\end{itemize}
We emphasize that $\delta a$ and $\delta l$ do not depend on $\varepsilon$. Finally, for all $\varepsilon\in [0,\zeta]$, consider a functional $J_\varepsilon:V\to \R$, Fr\'echet-differentiable at the point $v_0$. Assume further that there exists a number $\delta J(v_0)$ such that
\[ J_\varepsilon(w) - J_0(v) = DJ_0(v)(w-v) + \rho(\varepsilon)\,\delta J(v) + o\big(\|w-v\|+\rho(\varepsilon)\big),\ \forall (v,w)\in V\times V. \]
Then we have \cite{Amstutz2006} \\

\begin{theorem}\label{thm1} Let $v_\varepsilon \in V$ be the solution of the following problem : find $v\in V$ such that,
    \[ a_\varepsilon(v,\varphi) = l_\varepsilon(\varphi),\ \forall \varphi\in V. \]
    Let $w_0$ be the solution of the so-called adjoint problem : find $w\in V$ such that
    \[ a_0(w,\varphi) = - DJ_0(v_0)\varphi, \ \forall\varphi\in V. \]
    Then,
    \[ J_\varepsilon(v_\varepsilon) - J_0(v_0) = \rho(\varepsilon)\big(\delta a(v_0,w_0) - \delta l(w_0) + \delta J(v_0)\big) + o\big(\rho(\varepsilon)\big). \]
    \label{thm:adjoint-method}
\end{theorem}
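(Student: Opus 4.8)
The plan is to derive the expansion in five short steps: first control the distance $\|v_\varepsilon - v_0\|$, then substitute $v_\varepsilon$ into the hypothesized expansion of $J_\varepsilon$, next replace the directional-derivative term via the adjoint equation, then expand the resulting bilinear quantity using the assumptions on $a_\varepsilon$ and $l_\varepsilon$, and finally collect terms.

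First I would estimate $\|v_\varepsilon - v_0\|$. Subtracting the two variational identities $a_\varepsilon(v_\varepsilon,\varphi)=l_\varepsilon(\varphi)$ and $a_0(v_0,\varphi)=l_0(\varphi)$ gives, for every $\varphi\in V$,
\[ a_\varepsilon(v_\varepsilon - v_0,\varphi) = \big(l_\varepsilon(\varphi)-l_0(\varphi)\big) - \big(a_\varepsilon(v_0,\varphi)-a_0(v_0,\varphi)\big). \]
Taking $\varphi = v_\varepsilon - v_0$, using uniform coercivity on the left and, on the right, the bounds $\|l_\varepsilon-l_0\|_{\mathcal{L}(V)}=O(\rho(\varepsilon))$ and $\|a_\varepsilon-a_0\|_{\mathcal{L}_2(V)}=O(\rho(\varepsilon))$ (which follow from the stated $o(\rho(\varepsilon))$ estimates together with the continuity of $\delta a$ and $\delta l$), one obtains $\alpha\|v_\varepsilon-v_0\|^2 \le C\,\rho(\varepsilon)\,\|v_\varepsilon-v_0\|$, hence $\|v_\varepsilon-v_0\| = O(\rho(\varepsilon))$. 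This is the only genuinely quantitative estimate; everything else is bookkeeping of remainders.

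Next I would apply the hypothesized expansion of $J_\varepsilon$ with $v=v_0$ and $w=v_\varepsilon$. Since $\|v_\varepsilon-v_0\|=O(\rho(\varepsilon))$, the remainder $o(\|v_\varepsilon-v_0\|+\rho(\varepsilon))$ is $o(\rho(\varepsilon))$, so $J_\varepsilon(v_\varepsilon)-J_0(v_0) = DJ_0(v_0)(v_\varepsilon-v_0) + \rho(\varepsilon)\,\delta J(v_0) + o(\rho(\varepsilon))$. I then replace the first term using the adjoint equation $a_0(w_0,\varphi)=-DJ_0(v_0)\varphi$ with $\varphi = v_\varepsilon - v_0$ and the symmetry of $a_0$, giving $DJ_0(v_0)(v_\varepsilon-v_0) = -a_0(v_\varepsilon-v_0,w_0)$. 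Finally, writing $a_0(v_\varepsilon,w_0) = a_\varepsilon(v_\varepsilon,w_0) - (a_\varepsilon-a_0)(v_\varepsilon,w_0) = l_\varepsilon(w_0) - (a_\varepsilon-a_0)(v_\varepsilon,w_0)$ and $a_0(v_0,w_0)=l_0(w_0)$, I get
\[ a_0(v_\varepsilon-v_0,w_0) = \big(l_\varepsilon(w_0)-l_0(w_0)\big) - (a_\varepsilon-a_0)(v_\varepsilon,w_0). \]
The first bracket equals $\rho(\varepsilon)\,\delta l(w_0)+o(\rho(\varepsilon))$ by the hypothesis on $l_\varepsilon$; for the second, the hypothesis on $a_\varepsilon$ gives $(a_\varepsilon-a_0)(v_\varepsilon,w_0) = \rho(\varepsilon)\,\delta a(v_\varepsilon,w_0) + o(\rho(\varepsilon))$ using boundedness of $v_\varepsilon$ and $w_0$, and then continuity of $\delta a$ with $\|v_\varepsilon-v_0\|=O(\rho(\varepsilon))$ lets me swap $v_\varepsilon$ for $v_0$ at the cost of $O(\rho(\varepsilon)^2)=o(\rho(\varepsilon))$. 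Collecting, $DJ_0(v_0)(v_\varepsilon-v_0) = \rho(\varepsilon)\big(\delta a(v_0,w_0)-\delta l(w_0)\big)+o(\rho(\varepsilon))$, and substituting back yields the claimed identity.

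I do not expect a serious obstacle here; the only care needed is to check that every approximation error is truly $o(\rho(\varepsilon))$ — in particular that $\|v_\varepsilon-v_0\|$ is $O(\rho(\varepsilon))$ (Step 1) and not merely $o(1)$, and that $w_0$ is a fixed element of $V$, independent of $\varepsilon$, so that the $\mathcal{L}(V)$- and $\mathcal{L}_2(V)$-norm estimates apply to it directly in the last step.
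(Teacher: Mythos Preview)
Your argument is correct and is the standard route to this abstract result. Note, however, that the paper does not give its own proof of this theorem: it is stated as a quotation from \cite{Amstutz2006}, so there is no in-paper proof to compare against. Your five steps (the $O(\rho(\varepsilon))$ bound on $\|v_\varepsilon-v_0\|$ via uniform coercivity, the expansion of $J_\varepsilon$, the substitution through the adjoint equation using symmetry of $a_0$, the expansion of $a_0(v_\varepsilon-v_0,w_0)$ via the hypotheses on $a_\varepsilon$ and $l_\varepsilon$, and the swap $v_\varepsilon\to v_0$ inside $\delta a$) are exactly the classical proof in the spirit of Amstutz. The only implicit ingredient you use without stating it explicitly is the uniform bound on $\|v_\varepsilon\|$ (needed so that $o(\rho(\varepsilon))\|v_\varepsilon\|\|w_0\|=o(\rho(\varepsilon))$), but that follows immediately from uniform coercivity and the uniform bound $M_2$ on $l_\varepsilon$.
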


To be more specific, for $x_0\in D$ and $r>0$, we denote by $B_r$ the open ball centred at $x_0$ and of radius $r$.
We set 
$$V_\varepsilon :=  \{ v\in H^1(D\setminus B_\varepsilon)\ |\ v=0\ \text{on}\ \partial B_\varepsilon \}.$$
Then we consider the boundary value problem : \\
\begin{problem} Find $\widetilde{v}_\varepsilon$ in $V_\varepsilon$ such that
	\begin{equation}
    	\left \{ \begin{array}{cl}
            -\alpha\Delta \widetilde{v}_\varepsilon + \widetilde{v}_\varepsilon = h, & \text{in}\ D\setminus B_\varepsilon, \\
            \widetilde{v}_\varepsilon = 0, & \text{in}\ B_\varepsilon, \\
            \partial_n \widetilde{v}_\varepsilon = 0, & \text{on}\ \partial D.
        \end{array}
        \right .\label{eq:problem:u-D}
	\end{equation}
	\label{pb:problem:u-D}
\end{problem}
with $h:= \alpha\Delta f$, but $h$ can be any $L^2(D)$ function. We denote $\widetilde{v}_0$ the solution of the problem \\
\begin{problem} Find $\widetilde{v}_0$ in $H^1(D)$ such that
	\begin{equation}
		\left \{ \begin{array}{cl}
            -\alpha\Delta \widetilde{v}_0 + \widetilde{v}_0 = h, & \text{in}\ D, \\
            \partial_n \widetilde{v}_0 = 0, & \text{on}\ \partial D.
        \end{array}
        \right .\label{eq:problem:v-D}
	\end{equation}
	\label{pb:problem:v-D}
\end{problem}
The weak formulation of problems above reads, find $\widetilde{v}_\varepsilon$ in $V_\varepsilon$ such that, for all $\varphi$ in $V_\varepsilon\cap H^1(D)$, we have
\[ \tilde{a}_\varepsilon(\widetilde{v}_\varepsilon, \varphi) = \tilde{l}_\varepsilon(\varphi), \]
with,
\begin{align*}
    & \tilde{a}_\varepsilon(\widetilde{v}_\varepsilon, \varphi) := \alpha\int_{D\setminus B_\varepsilon} \nabla \widetilde{v}_\varepsilon\cdot\nabla\varphi\ dx + \int_{D\setminus B_\varepsilon} \widetilde{v}_\varepsilon\,\varphi\ dx, \\
    & \tilde{l}_\varepsilon(\varphi) := \int_{D\setminus B_\varepsilon} h\,\varphi\ dx.
\end{align*}
The dependency of the space $V_\varepsilon$ on $\varepsilon$ prevents us from using Theorem \ref{thm1} directly, therefore, we introduce a  truncation technique \cite{Guillaume2002}, which consists of 
inserting a ball $B_R$, for a fixed $R>\varepsilon$ and splitting Problem \ref{pb:problem:u-D} into two sub-problems that we glue at their common boundary (see Figure \ref{fig:domain-split}). More precisely, we consider the sub-problems : an \textit{internal} problem
\[ \left \{ \begin{array}{cl}
    -\alpha\Delta v_{\varepsilon,R} + v_{\varepsilon,R} = h, & \text{in}\ B_R\setminus B_\varepsilon, \\
    v_{\varepsilon,R} = 0, & \text{on}\ \partial B_\varepsilon, \\
    v_{\varepsilon,R} = v_\varepsilon, & \text{on}\ \partial B_R,
\end{array} \right .\]
and an \textit{external} problem
\[ \left \{ \begin{array}{cl}
    -\alpha\Delta v_\varepsilon + v_\varepsilon = h, & \text{in}\ D\setminus B_R, \\
    \partial_n v_\varepsilon = \partial_n v_{\varepsilon,R}, & \text{on}\ \partial B_R, \\
    \partial_n v_\varepsilon = 0, & \text{on}\ \partial D.
\end{array}
\right .\]

\begin{figure}[H]
    \centering
    \subfloat[Before splitting.]{
        \begin{tikzpicture}[scale=0.35]
            
            \draw[fill=black!10] 
                (-6.75, 15) .. controls + (-1,-0.5) and + (1,1.5) ..
                (-12.5, 10.75) .. controls + (-1,-1.5) and + (-0.5,1.5) ..
                (-13.25, 0) .. controls + (0.5,-1.5) and + (-3,-1) ..
                (-5, 4.75) .. controls + (3,1) and + (1,-2) ..
                (5, 7.25) .. controls + (-1,2) and + (2,-1) ..
                (-2.75, 14.75) .. controls + (-2,1) and + (1,0.5) ..
                (-6.75, 15) -- cycle;
                
            \node (6) at (-11,3.5) {$D\setminus B_\varepsilon$};
                
        	\node[circle, draw=black, fill=white, inner sep=0pt,minimum size=25pt] (1) at (-5,10) {$B_\varepsilon$};
        	
        	\draw[->] (-13,14) -- (-10,10);
        	
        	\node (3) at (-13.5,14.5) {$\widetilde{v}_\varepsilon$};
        \end{tikzpicture}
    }
	\qquad
	\subfloat[After splitting.]{
        \begin{tikzpicture}[scale=0.35]
            
            \draw[fill=black!10] 
                (-6.75, 15) .. controls + (-1,-0.5) and + (1,1.5) ..
                (-12.5, 10.75) .. controls + (-1,-1.5) and + (-0.5,1.5) ..
                (-13.25, 0) .. controls + (0.5,-1.5) and + (-3,-1) ..
                (-5, 4.75) .. controls + (3,1) and + (1,-2) ..
                (5, 7.25) .. controls + (-1,2) and + (2,-1) ..
                (-2.75, 14.75) .. controls + (-2,1) and + (1,0.5) ..
                (-6.75, 15) -- cycle;
            
            \node (6) at (-11,3.5) {$D\setminus B_R$};
                
            \node[circle, dashed, draw=black, fill=black!25, inner sep=0pt,minimum size=70pt] (1) at (-5,10) {};
        	\node (6) at (-5,7.8) {$B_R\setminus B_\varepsilon$};
        	\node[circle, draw=black, fill=white, inner sep=0pt,minimum size=25pt] (2) at (-5,10) {$B_\varepsilon$};
        	
        	\draw[->] (-13,14) -- (-10,10);
        	\node (3) at (-13.5,14.5) {$v_\varepsilon$};
        	\draw[->] (1,14) -- (-2.8,11);
        	\node (3) at (1.5,14.5) {$v_{\varepsilon,R}$};
        \end{tikzpicture}
    }
    \caption{Illustration of the splitting.}
	\label{fig:domain-split}
\end{figure}


As the two sub-problems transform the initial one into a transmission problem. We have \\

\begin{proposition} We have,
    \[ \widetilde{v}_\varepsilon = \begin{cases} 
        v_\varepsilon, & \text{in}\ D\setminus B_R, \\
        v_{\varepsilon,R}, & \text{in}\ B_R\setminus B_\varepsilon.
    \end{cases} \]
    \label{prop:gluing-solutions}
\end{proposition}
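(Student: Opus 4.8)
The plan is to form the candidate $w$ on $D\setminus B_\varepsilon$ by setting $w=v_\varepsilon$ on $D\setminus B_R$ and $w=v_{\varepsilon,R}$ on $B_R\setminus B_\varepsilon$, to check that $w$ is an admissible competitor in the weak formulation of Problem~\ref{pb:problem:u-D}, to verify that it satisfies that weak formulation, and to conclude by uniqueness that $w=\widetilde v_\varepsilon$. Uniqueness holds because $\tilde a_\varepsilon$ is continuous and coercive on $V_\varepsilon$ --- indeed $\tilde a_\varepsilon(v,v)\geq\min(1,\alpha)\,\|v\|_{H^1(D\setminus B_\varepsilon)}^2$ --- so Lax--Milgram yields a unique solution.

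First I would verify that $w\in V_\varepsilon$. Each piece belongs to $H^1$ of its subdomain, and the only issue is the matching of their traces across the interface $\partial B_R$; this is exactly the Dirichlet condition $v_{\varepsilon,R}=v_\varepsilon$ on $\partial B_R$ imposed in the internal problem, so the two traces coincide in $H^{1/2}(\partial B_R)$ and $w\in H^1(D\setminus B_\varepsilon)$ by the usual gluing lemma for Sobolev functions. Since $w=v_{\varepsilon,R}=0$ on $\partial B_\varepsilon$, we get $w\in V_\varepsilon$.

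Next, I would establish the weak identity $\tilde a_\varepsilon(w,\varphi)=\tilde l_\varepsilon(\varphi)$ for every $\varphi\in V_\varepsilon\cap H^1(D)$. Because $-\alpha\Delta v_{\varepsilon,R}+v_{\varepsilon,R}=h\in L^2$, the conormal derivative of $v_{\varepsilon,R}$ is defined in $H^{-1/2}$ of the boundary and Green's formula on the annulus gives
\[ \alpha\!\int_{B_R\setminus B_\varepsilon}\!\nabla v_{\varepsilon,R}\cdot\nabla\varphi\,dx + \int_{B_R\setminus B_\varepsilon}\! v_{\varepsilon,R}\,\varphi\,dx = \int_{B_R\setminus B_\varepsilon}\! h\,\varphi\,dx + \alpha\,\langle\partial_n v_{\varepsilon,R},\varphi\rangle_{\partial B_R}, \]
the boundary contribution on $\partial B_\varepsilon$ vanishing since $\varphi=0$ there. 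Likewise, using $\partial_n v_\varepsilon=0$ on $\partial D$ and the opposite orientation of the outer normal of $D\setminus B_R$ along $\partial B_R$,
\[ \alpha\!\int_{D\setminus B_R}\!\nabla v_\varepsilon\cdot\nabla\varphi\,dx + \int_{D\setminus B_R}\! v_\varepsilon\,\varphi\,dx = \int_{D\setminus B_R}\! h\,\varphi\,dx - \alpha\,\langle\partial_n v_\varepsilon,\varphi\rangle_{\partial B_R}. \]
Adding the two identities and invoking the transmission condition $\partial_n v_\varepsilon=\partial_n v_{\varepsilon,R}$ on $\partial B_R$ from the external problem, the interface terms cancel and one is left precisely with $\tilde a_\varepsilon(w,\varphi)=\tilde l_\varepsilon(\varphi)$. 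By uniqueness, $w=\widetilde v_\varepsilon$.

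The delicate point is the pair of Green's formulas: the conormal traces on $\partial B_R$ live a priori only in $H^{-1/2}(\partial B_R)$, so one must use the generalized Green identity valid for $H^1$ functions with $L^2$ Laplacian and make sure the two duality pairings on $\partial B_R$ refer to the same functional with consistent sign conventions; everything else is routine. An alternative, slightly cleaner route that avoids manipulating conormal traces is to argue in reverse: set $v_\varepsilon:=\widetilde v_\varepsilon|_{D\setminus B_R}$ and $v_{\varepsilon,R}:=\widetilde v_\varepsilon|_{B_R\setminus B_\varepsilon}$, localize the weak formulation of Problem~\ref{pb:problem:u-D} to test functions supported in each subdomain to see that these restrictions solve the internal and external problems, and then use well-posedness of those sub-problems (Lax--Milgram after lifting the inhomogeneous Dirichlet data in $H^{1/2}(\partial B_R)$ for the internal one, and the Neumann data in $H^{-1/2}(\partial B_R)$ for the external one) to identify them with the independently defined pieces.
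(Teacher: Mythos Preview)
Your proof is correct and follows essentially the same route as the paper: define the piecewise candidate, split $\tilde a_\varepsilon$ over the two subdomains, integrate by parts in each, cancel the interface contributions on $\partial B_R$ via the transmission condition, and conclude by uniqueness of the weak solution. You are simply more explicit than the paper about the auxiliary points (membership of the glued function in $V_\varepsilon$, the $H^{-1/2}$ sense of the conormal traces, and Lax--Milgram for uniqueness), and you add a harmless alternative viewpoint at the end.
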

\begin{proof}
    We set \begin{equation} v := \begin{cases} 
        v_\varepsilon & ,\ \text{in}\ D\setminus B_R, \\
        v_{\varepsilon,R} & ,\ \text{in}\ B_R\setminus B_\varepsilon.
    \end{cases} \label{eq:gluing-solutions}\end{equation}
    Let $\varphi$ be in $V_\varepsilon$, then,
    \begin{align*}
        \tilde{a}_\varepsilon(v, \varphi) & = \alpha\int_{D\setminus B_R} \nabla v\cdot\nabla\varphi\ dx + \int_{D\setminus B_R} v\,\varphi\ dx + \alpha\int_{B_R\setminus B_\varepsilon} \nabla v\cdot\nabla\varphi\ dx + \int_{B_R\setminus B_\varepsilon} v\,\varphi\ dx \\
        %
        %
    \end{align*}
    Replacing $v$ by its expression \eqref{eq:gluing-solutions} and integrating by parts yields,
    \begin{align*}
        %
        \tilde{a}_\varepsilon(v, \varphi) & = \int_{D\setminus B_R} (-\alpha\Delta v_\varepsilon + v_\varepsilon)\,\varphi\ dx + \int_{B_R\setminus B_\varepsilon} (-\alpha\Delta v_{\varepsilon,R} + v_{\varepsilon,R})\,\varphi\ dx \\ & \hspace{2cm} + \alpha\int_{\partial B_R} \partial_{n_\text{int}} v_\varepsilon\,\varphi\ d\sigma + \alpha\int_{\partial B_R} \partial_{n_\text{ext}} v_{\varepsilon,R}\,\varphi\ d\sigma \\
        %
        %
        %
        & = \int_{D\setminus B_\varepsilon} h\,\varphi\ dx = \tilde{l}_\varepsilon(\varphi).
    \end{align*}
    By the uniqueness of the solution of Problem \ref{pb:problem:u-D}, we have $v=\widetilde{v}_\varepsilon$.
\end{proof}

For the \textit{internal} problem, we introduce the notation $v_\varepsilon^{h,\phi}$ instead of $v_{\epsilon, R}$, the solution of the more general problem \\

\begin{problem} Find $v_\varepsilon^{h,\phi}$ in $\{ v\in H^1(B_R\setminus B_\varepsilon)\ |\ v=0\ \text{on}\ \partial B_\varepsilon \}$ such that
	\begin{equation}
    	\left \{ \begin{array}{cl}
            -\alpha\Delta v_\varepsilon^{h,\phi} + v_\varepsilon^{h,\phi} = h, & \text{in}\ B_R\setminus B_\varepsilon, \\
            v_\varepsilon^{h,\phi} = 0, & \text{on}\ \partial B_\varepsilon, \\
            v_\varepsilon^{h,\phi} = \phi, & \text{on}\ \partial B_R.
        \end{array}
        \right .\label{eq:problem:v-epsilon-inside}
	\end{equation}
	\label{pb:problem:v-epsilon-inside}
\end{problem}
Therefore, $v_{\varepsilon,R} = v_\varepsilon^{h,\phi}$, when $\phi=v_\varepsilon$. We also notice that,
\[ v_\varepsilon^{h,\phi} = v_\varepsilon^{h,0} + v_\varepsilon^{0,\phi}. \]
We remind the Dirichlet-to-Neumann operator $T_\varepsilon : H^{1/2}(\partial B_R) \to H^{-1/2}(\partial B_R)$ by 
\[ T_\varepsilon(\phi) := \nabla v_\varepsilon^{0,\phi}\cdot n. \]
and we set \[ h_\varepsilon := -\nabla v_\varepsilon^{h,0}\cdot n\in H^{-1/2}(\partial B_R). \]
Hence, setting $V_R=H^1(D\setminus B_R)$, we can rewrite the \textit{external} problem using this operator as following (we still denote by $v_\varepsilon$ the solution) : \\

\begin{problem} Find $v_\varepsilon$ in $V_R$ such that
	\begin{equation}
    	\left \{ \begin{array}{cl}
            -\alpha\Delta v_\varepsilon + v_\varepsilon = h, & \text{in}\ D\setminus B_R, \\
            -\partial_n v_\varepsilon + T_\varepsilon v_\varepsilon = h_\varepsilon, & \text{on}\ \partial B_R, \\
            \partial_n v_\varepsilon = 0, & \text{on}\ \partial D.
        \end{array}
        \right .\label{eq:problem:v-epsilon-outside}
	\end{equation}
	\label{pb:problem:v-epsilon-outside}
\end{problem}

For $\varepsilon\in [0,\zeta]$, $R>\zeta$, and $v,\varphi$ in $V_R := H^1(D\setminus B_R)$, we define
\begin{align*}
    & a_\varepsilon(v,\varphi) := \alpha\int_{D\setminus B_R}\nabla v\cdot\nabla \varphi\ dx + \alpha\int_{\partial B_R} T_\varepsilon v\,\varphi\ d\sigma + \int_{D\setminus B_R}v\,\varphi\ dx, \\
    &  l_\varepsilon(\varphi) := \int_{D\setminus B_R} h\,\varphi\ dx + \alpha\int_{\partial B_R} h_\varepsilon\, \varphi\ d\sigma.
\end{align*}
So that the associated variational formulation reads : find $v\in V_R$, such that  
\[ a_\varepsilon(v,\varphi) = l_\varepsilon(\varphi),\qquad\forall \varphi\in V_R. \]
It is easily checked that $a_\varepsilon$ is symmetric and $l_\varepsilon$ is continuous.
%
%

We take as cost function, for $p>1$,
\[ \widetilde{J}_\varepsilon(\widetilde{v}) := \int_{D\setminus B_\varepsilon} |h-\widetilde{v}|^p\ dx,\ \forall \widetilde{v}\in V_\varepsilon, \]

We define now the cost functional on $V_R$ as follows : for $v\in V_R$, we set $\widetilde{v}_\varepsilon \in V_\varepsilon$ the extension of $v$ in $D\setminus B_\varepsilon$ such that,
\begin{itemize}
    \item[\textbullet] $\widetilde{v}_\varepsilon|_{D\setminus B_R} = v$,
    \item[\textbullet] $\widetilde{v}_\varepsilon|_{B_R\setminus B_\varepsilon} = v_\varepsilon^{h,\phi}$, for $\phi=v$ on $\partial B_R$.
\end{itemize}
We notice that $\widetilde{v}_\varepsilon$ do not satisfy Problem \ref{pb:problem:u-D} except if $v$ is the solution of Problem \ref{pb:problem:v-epsilon-outside}. Then, we may define the restriction of $\widetilde{J}_\varepsilon$ to $V_R$ by : 
\[ J_\varepsilon (v) := \widetilde{J}_\varepsilon(\widetilde{v}_\varepsilon),\ \forall v\in V_R. \]
\subsection{The Adjoint Problem and Related Estimates}
\label{sec:adjoint}

We state now the adjoint problem associated to Problem \ref{pb:problem:v-epsilon-outside} when $\varepsilon=0$ : we denote by $w_0$ the weak solution in $V_R$ of \[ a_0(w_0,\varphi) = -DJ_0(v_0)\,\varphi,\ \forall\varphi\in V_R, \]
where $v_0$ is the solution of Problem \ref{pb:problem:v-epsilon-outside}. The adjoint state $w_0$ is then the solution of \\
\begin{problem} Find $w_0$ in $V_R$ such that
	\begin{equation}
    	\left \{ \begin{array}{cl}
            -\alpha\Delta w_0 + w_0 = -v_0|v_0|^{p-2}, & \text{in}\ D\setminus B_R, \\
            -\partial_n w_0 + T_0 w_0 = h_0, & \text{on}\ \partial B_R, \\
            \partial_n w_0 = 0, & \text{on}\ \partial D.
        \end{array}
        \right .\label{eq:problem:w-epsilon-outside}
	\end{equation}
	\label{pb:problem:w-epsilon-outside}
\end{problem}
We aim to find $\delta a$, $\delta l$ and $\delta J$ from the adjoint method, Theorem \ref{thm:adjoint-method}. Let $h\in L^2(D)$ and $\phi\in H^{1/2}(\partial B_R)$. 

\subsection{Variations of the Bilinear Form} 

We start by giving an explicit formulation for both $v_0^{0,\phi}$ and $v_\varepsilon^{0,\phi}$, which is analogous to \cite{Samet2003}, with the following proposition : \\

\begin{proposition} For $\phi$ in $H^{1/2}(\partial B_R)$, we have,
    \[ v_0^{0,\phi} (r, \theta) = \sum_{n\in\mathbb{Z}} \frac{I_n(\alpha^{-1/2}r)}{I_n(\alpha^{-1/2}R)}\,\phi_n\,e^{in\theta}, \]
    and,
    \[ v_\varepsilon^{0,\phi}(r, \theta) = \sum_{n\in\mathbb{Z}} \frac{I_n(\alpha^{-1/2}\varepsilon)K_n(\alpha^{-1/2}r) - K_n(\alpha^{-1/2}\varepsilon)I_n(\alpha^{-1/2}r)}{I_n(\alpha^{-1/2}\varepsilon)K_n(\alpha^{-1/2}R) - K_n(\alpha^{-1/2}\varepsilon)I_n(\alpha^{-1/2}R)}\,\phi_n\,e^{in\theta}, \]
    where $(r,\theta)$ are the polar coordinates in $\R^2$, $(\phi_n)_n$ are the Fourier coefficients of $\phi$, $I_n$ and $K_n$ are the modified Bessel functions of the first and second kind respectively \cite{Abramowitz1972, Oldham2009}.
\end{proposition}
\begin{proof}
    Using polar coordinates in $\R^2$, we have,
    \[ v_\varepsilon^{h,0}(r, \theta) = \sum_{n\in\Z} c_n(r) \,e^{in\theta} \qquad \text{and} \qquad h(r, \theta) = \sum_{n\in\Z} h_n(r) \,e^{in\theta}, \]
    where $c_n$ satisfies, for all $n$ in $\Z$, and $0<r\leq R$,
    \begin{equation*}
        -\alpha\,r^2\,c_n''(r) - \alpha\,r\,c_n'(r) + (r^2+\alpha n^2) c_n(r) = 0.
    \end{equation*}
    We solve the equation, and we get,
    $$ c_{0,n}(r) = A_{0,n} I_n(\alpha^{-1/2}\,r), $$
    and
    $$ c_{\varepsilon,n}(r) = A_{\varepsilon,n} I_n(\alpha^{-1/2}\,r) + B_{\varepsilon,n} K_n(\alpha^{-1/2}\,r). $$
    By using the boundaries conditions, we have the result.
\end{proof}
\begin{note}
    According to \cite{Abramowitz1972,Oldham2009}, we have for all $n$ in $\Z$, $I_{-n} = I_n$ and $K_{-n} = K_n$.
\end{note}

Next, we state the variation of the solution with respect to hole's radius : \\

\begin{proposition} For $\phi$ in $H^{1/2}(\partial B_R)$, we set,
    \[ \delta v^{0,\phi}(r) := -\phi_0\frac{I_0(\alpha^{-1/2}R)K_0(\alpha^{-1/2}r) - K_0(\alpha^{-1/2}R)I_0(\alpha^{-1/2}r)}{I_0(\alpha^{-1/2} R)^2}. \]
    Then, for $\varepsilon$ sufficiently small, we have the following asymptotic estimation,
    \[ v_\varepsilon^{0,\phi}(r, \theta) - v_0^{0,\phi}(r, \theta) - \frac{-1}{\ln{\varepsilon}}\delta v^{0,\phi}(r) = o\left(\frac{-1}{\ln{\varepsilon}}\right). \]
\end{proposition}
\begin{proof} We have,
    \[ v_\varepsilon^{0,\phi}(r, \theta) - v_0^{0,\phi}(r, \theta) = \left( \frac{I_0(\alpha^{-1/2}\varepsilon)K_0(\alpha^{-1/2}r) - K_0(\alpha^{-1/2}\varepsilon)I_0(\alpha^{-1/2}r)}{I_0(\alpha^{-1/2}\varepsilon)K_0(\alpha^{-1/2}R) - K_0(\alpha^{-1/2}\varepsilon)I_0(\alpha^{-1/2}R)} - \frac{I_0(\alpha^{-1/2}r)}{I_0(\alpha^{-1/2}R)} \right)\,\phi_0 + R_\varepsilon(r,\theta), \]
    where, \[ R_\varepsilon(r,\theta) := \sum_{n\in\mathbb{Z}^*} \left( \frac{I_n(\alpha^{-1/2}\varepsilon)K_n(\alpha^{-1/2}r) - K_n(\alpha^{-1/2}\varepsilon)I_n(\alpha^{-1/2}r)}{I_n(\alpha^{-1/2}\varepsilon)K_n(\alpha^{-1/2}R) - K_n(\alpha^{-1/2}\varepsilon)I_n(\alpha^{-1/2}R)} - \frac{I_n(\alpha^{-1/2}r)}{I_n(\alpha^{-1/2}R)} \right)\,\phi_n\,e^{in\theta}. \]
    Then,
    \[ v_\varepsilon^{0,\phi}(r, \theta) - v_0^{0,\phi}(r, \theta) = -\delta v^{0,\phi}(r)\,\frac{I_0(\alpha^{-1/2} R) I_0(\alpha^{-1/2}\varepsilon)}{I_0(\alpha^{-1/2}\varepsilon)K_0(\alpha^{-1/2}R) - K_0(\alpha^{-1/2}\varepsilon)I_0(\alpha^{-1/2}R)} + R_\varepsilon(r,\theta). \]
    We use that, \cite{Abramowitz1972},
    \[ K_0(x) = -(\gamma-\ln{2} + \ln{x}) I_0(x) + x r_1(x), \]
    and get,
    \[ v_\varepsilon^{0,\phi}(r, \theta) - v_0^{0,\phi}(r, \theta) = -\delta v^{0,\phi}(r)\,\left( M + I_0(\alpha^{-1/2}R)\ln{\varepsilon} + \varepsilon r_2(\varepsilon)\right)^{-1} + R_\varepsilon(r,\theta), \]
    where, \[ M := \frac{K_0(\alpha^{-1/2}R) + I_0(\alpha^{-1/2}R)\big(\gamma-\ln{2} + \ln{(\alpha^{-1/2})} \big)}{I_0(\alpha^{-1/2} R)}, \]
    is a constant independent of $\varepsilon$. Finally,
    \[ v_\varepsilon^{0,\phi}(r, \theta) - v_0^{0,\phi}(r, \theta) = -\delta v^{0,\phi}(r)\,\frac{-1}{\ln{\varepsilon}}\left( 1 + \frac{M}{\ln{\varepsilon}} + \varepsilon r_3(\varepsilon)\right)^{-1} + R_\varepsilon(r,\theta). \]
    
    It remains to show that $R_\varepsilon(r,\theta) = o\left(\frac{-1}{\ln{\varepsilon}}\right)$. We have,
    $$ R_\varepsilon(r,\theta) := \frac{-1}{\ln{\varepsilon}}\Theta_\varepsilon(r,\theta), $$
    where,
    $$ \Theta_\varepsilon(r,\theta) := \sum_{n\in\mathbb{Z}^*} \ln{\varepsilon} \frac{ K_n(\alpha^{-1/2}R)I_n(\alpha^{-1/2}r) - I_n(\alpha^{-1/2}R)K_n(\alpha^{-1/2}r)}{I_n(\alpha^{-1/2}\varepsilon)K_n(\alpha^{-1/2}R) - I_n(\alpha^{-1/2}R)K_n(\alpha^{-1/2}\varepsilon)}\frac{I_n(\alpha^{-1/2}\varepsilon)}{I_n(\alpha^{-1/2}R)}\,\phi_n\,e^{in\theta}. $$
    Moreover,
    $$ \big|\Theta_\varepsilon(r,\theta)\big| \leq \sum_{n\in\mathbb{Z}^*} \big|\ln{\varepsilon}\big| \Bigg|\frac{ I_n(\alpha^{-1/2}R)K_n(\alpha^{-1/2}r) - K_n(\alpha^{-1/2}R)I_n(\alpha^{-1/2}r)}{ I_n(\alpha^{-1/2}R)K_n(\alpha^{-1/2}\varepsilon) - I_n(\alpha^{-1/2}\varepsilon)K_n(\alpha^{-1/2}R)}\Bigg|\Bigg|\frac{I_n(\alpha^{-1/2}\varepsilon)}{I_n(\alpha^{-1/2}R)}\Bigg|\,\big|\phi_n\big|. $$
    Since $I_n$ is an increasing function and $K_n$ is a decreasing function, we have, for $r\in[\varepsilon,R]$,
    $$ \big|I_n(\alpha^{-1/2}R)K_n(\alpha^{-1/2}r) - K_n(\alpha^{-1/2}R)I_n(\alpha^{-1/2}r)\big| \leq \big|I_n(\alpha^{-1/2}R)K_n(\alpha^{-1/2}\varepsilon) - K_n(\alpha^{-1/2}R)I_n(\alpha^{-1/2}\varepsilon)\big|. $$
    Thus,
    $$ \big|\Theta_\varepsilon(r,\theta)\big| \leq \sum_{n\in\mathbb{Z}^*} \big|\ln{\varepsilon}\big| \Bigg|\frac{I_n(\alpha^{-1/2}\varepsilon)}{I_n(\alpha^{-1/2}R)}\Bigg|\,\big|\phi_n\big|. $$
    Finally, using that $$ I_n(\alpha^{-1/2} \varepsilon) = \frac{1}{n!}\left(\frac{\alpha^{-1/2}}{2}\right)^n \varepsilon^n + \varepsilon^n\,r(\varepsilon), $$
    we have that $\Theta_\varepsilon$ tends to $0$ when $\varepsilon$ goes to $0$.
\end{proof}
By noticing that \cite{Abramowitz1972},
\begin{equation} \label{eq:wronskian}
    I_0(x)K_0'(x) - K_0(x)I_0'(x) = -I_0(x)K_1(x) - K_0(x)I_1(x) = - W\big(K_0(x),I_0(x)\big) = - \frac{1}{x},
\end{equation}
with $W$ the Wronskian and by using the previous result, we have the following : \\

\begin{proposition} For $\phi$ in $H^{1/2}(\partial B_R)$, we define,
    \[ \delta T(\phi) := \phi_0\frac{1}{R\, I_0(\alpha^{-1/2} R)^2}. \]
    Then, for $\varepsilon$ sufficiently small, we have the following asymptotic estimation,
    \[ \left\|T_\varepsilon - T_0 - \frac{-1}{\ln{\varepsilon}}\delta T\right\|_{\mathcal{L}\big(H^{1/2}(\partial B_R), H^{-1/2}(\partial B_R)\big)} = o\left(\frac{-1}{\ln{\varepsilon}}\right). \]
\end{proposition}

Finally, we can derive from the previous proposition the variations of the bilinear form : \\

\begin{proposition} For $\phi$ in $H^{1/2}(\partial B_R)$, we define,
    \[ \delta a(v,w) := \alpha\frac{v^\text{mean}}{I_0(\alpha^{-1/2} R)}\frac{w^\text{mean}}{I_0(\alpha^{-1/2} R)}, \]
    where $v^\text{mean}$ and $w^\text{mean}$ denote the mean value of $v$ and $w$ on $\partial B_R$. Then, for $\varepsilon$ sufficiently small, we have the following asymptotic estimation,
    \[ \left|a_\varepsilon(v,w) - a_0(v,w) - \frac{-2\pi}{\ln{\varepsilon}}\delta a(v,w)\right| = o\left(\frac{-1}{\ln{\varepsilon}}\right)\,\|v\|_{V_R} \|w\|_{V_R},\ \forall v,w \in V_R. \]
\end{proposition}

\subsection{Variations of the Linear Form}

We give an explicit formulation for both $v_0^{h,0}$ and $v_\varepsilon^{h,0}$ with the following proposition : \\

\begin{proposition} \label{prop:solution-unhomo} For $h$ in $L^2(B_R)$, we have,
    \[ v_0^{h,0} (r, \theta) = \sum_{n\in\mathbb{Z}} \Big( \big( A_{0,n} +  A_{n}^\text{p}(r) \big) I_n(\alpha^{-1/2}\,r) + B_{n}^\text{p}(r) K_n(\alpha^{-1/2}\,r) \Big)\,e^{in\theta}, \]
    with,
    \begin{align*}
        & A_{n}^\text{p}(r) := -\alpha^{-1}\int_0^r s\,K_n(\alpha^{-1/2}\, s) h_n(s)\ ds, \\
        & B_{n}^\text{p}(r) := \alpha^{-1}\int_0^r s\,I_n(\alpha^{-1/2}\, s) h_n(s)\ ds, \\
        & A_{0,n} := -\frac{ A_{n}^\text{p}(R) I_n(\alpha^{-1/2}\, R) + B_{n}^\text{p}(R) K_n(\alpha^{-1/2}\, R) }{I_n(\alpha^{-1/2}\, R)}.
    \end{align*}
    
    Moreover, 
    \[ v_\varepsilon^{h,0}(r, \theta) = \sum_{n\in\mathbb{Z}} \Big(\big( A_{\varepsilon,n} +  A_{n}^\text{p}(r) \big) I_n(\alpha^{-1/2}\,r) + \big(B_{\varepsilon,n} + B_{n}^\text{p}(r)\big) K_n(\alpha^{-1/2}\,r) \Big) \,e^{in\theta}, \]
    with,
    \begin{align*}
        & A_{\varepsilon, n} = \frac{K_n(\alpha^{-1/2}\varepsilon)\big( A_{n}^\text{p}(R)I_n(\alpha^{-1/2}R) + B_{n}^\text{p}(R)K_n(\alpha^{-1/2}R) \big)}{I_n(\alpha^{-1/2}\varepsilon)K_n(\alpha^{-1/2}R) - K_n(\alpha^{-1/2}\varepsilon)I_n(\alpha^{-1/2}R)}, \\
        & B_{\varepsilon, n} = -\frac{I_n(\alpha^{-1/2}\varepsilon)\big( A_{n}^\text{p}(R)I_n(\alpha^{-1/2}R) + B_{n}^\text{p}(R)K_n(\alpha^{-1/2}R) \big)}{I_n(\alpha^{-1/2}\varepsilon)K_n(\alpha^{-1/2}R) - K_n(\alpha^{-1/2}\varepsilon)I_n(\alpha^{-1/2}R)}.
    \end{align*}
\end{proposition}
\begin{proof}
    Using polar coordinates in $\R^2$, we have,
    \[ v_\varepsilon^{h,0}(r, \theta) = \sum_{n\in\Z} c_n(r) \,e^{in\theta} \qquad \text{and} \qquad h(r, \theta) = \sum_{n\in\Z} h_n(r) \,e^{in\theta}, \]
    where $c_n$ satisfies, for all $n$ in $\Z$ and $0<r\leq R$,
    \begin{equation} \label{eq:fourier-coeff-equation}
        -\alpha\,r^2\,c_n''(r) - \alpha\,r\,c_n'(r) + (r^2+\alpha n^2) c_n(r) = r^2 h_n(r).
    \end{equation}
    Firstly, we solve the homogeneous equation, and we get,
    $$ c_{0,n}^\text{h}(r) = A_{0,n} I_n(\alpha^{-1/2}\,r), $$
    and
    $$ c_{\varepsilon,n}^\text{h}(r) = A_{\varepsilon,n} I_n(\alpha^{-1/2}\,r) + B_{\varepsilon,n} K_n(\alpha^{-1/2}\,r). $$
    Secondly, we use the variation of parameters method to get the particular solution,
    \begin{equation} \label{eq:particular-solution}
        c_n^\text{p}(r) = A_{n}^\text{p}(r) I_n(\alpha^{-1/2}\,r) + B_{n}^\text{p}(r) K_n(\alpha^{-1/2}\,r).
    \end{equation}
    By replacing \eqref{eq:particular-solution} into \eqref{eq:fourier-coeff-equation}, and by supposing that, $$ (A_{n}^\text{p})'(r) I_n(\alpha^{-1/2}\,r) + (B_{n}^\text{p})'(r) K_n(\alpha^{-1/2}\,r) = 0, $$
    we get,
    $$ (A_{n}^\text{p})'(r) I_n'(\alpha^{-1/2}\,r) + (B_{n}^\text{p})'(r) K_n'(\alpha^{-1/2}\,r) = -\alpha^{-1/2} h_n(r). $$
    Solving the last two equations, we get the value of $A_{n}^\text{p}(r)$ and $B_{n}^\text{p})(r)$ as stated in the theorem. Finally, by the superposition principle and by using the boundaries conditions, we have the result.
\end{proof}


\begin{note} 
    Since $h$ is in $L^2(D)$, using the Parseval's equality we have that the Fourier coefficients $h_n$ are in $L^2(D)$ as well. Similarly for the Fourier coefficients $c_n$. As a result, we have that the integral in the $A_{n}^\text{p}$ is convergent.
\end{note}

\begin{note} 
    We have, $$ v_0^{h,0} (x_0) = \sum_{n\in\mathbb{Z}} A_{0,n} I_n(0)\,e^{in\theta},\ \forall \theta \in [0,2\pi[, $$
    and in particular, for $\theta=0$, we have 
    $$ v_0^{h,0} (x_0) = \sum_{n\in\mathbb{Z}} A_{0,n} I_n(0). $$
    Moreover, \cite{Abramowitz1972, Oldham2009}, $I_n(0)$ vanishes for $n\in\N^*$ and $I_0(0) = 1$. Then,
    \begin{equation} \label{eq:A00}
        A_{0,0} = v_0^{h,0}(x_0).
    \end{equation}
\end{note}

Now, we can state the variation of the solution with respect to hole's radius : \\

\begin{proposition} For $h$ in $L^2(B_R)$, we set,
    \[ \delta v^{h,0}(r) := -A_{0,0} \frac{I_0(\alpha^{-1/2}\, R)K_0(\alpha^{-1/2}r) - K_0(\alpha^{-1/2}R)I_0(\alpha^{-1/2}r)}{I_0(\alpha^{-1/2}\,R)}, \]
    where $A_{0,0}$ is defined in Proposition \ref{prop:solution-unhomo}. Then, for $\varepsilon$ sufficiently small, we have the following asymptotic estimation,
    \[ v_\varepsilon^{h,0}(r, \theta) - v_0^{h,0}(r, \theta) - \frac{-1}{\ln{\varepsilon}}\delta v^{h,0}(r) = o\left(\frac{-1}{\ln{\varepsilon}}\right). \]
\end{proposition}
\begin{proof} We have,
     $$ v_\varepsilon^{h,0}(r, \theta) - v_0^{h,0}(r, \theta) = \big( A_{\varepsilon,0} - A_{0,0} \big) I_0(\alpha^{-1/2}\,r) + B_{\varepsilon,0} K_0(\alpha^{-1/2}\,r) + R_{\varepsilon}(r,\theta), $$
    and,
    $$ R_{\varepsilon}(r,\theta) := \sum_{n\in\mathbb{Z}^*} \Big( \big( A_{\varepsilon,n} - A_{0,n} \big) I_n(\alpha^{-1/2}\,r) + B_{\varepsilon,n} K_n(\alpha^{-1/2}\,r) \Big)\,e^{in\theta}. $$
    We have, for all $n$ in $\Z$,
    $$ A_{\varepsilon,n} - A_{0,n} = -A_{0,n} \frac{I_n(\alpha^{-1/2}\,\varepsilon)K_n(\alpha^{-1/2}\,R)}{I_n(\alpha^{-1/2}\,\varepsilon)K_n(\alpha^{-1/2}\,R) - K_n(\alpha^{-1/2}\,\varepsilon)I_n(\alpha^{-1/2}\,R)}, $$
    and,
    $$ B_{\varepsilon,n} = A_{0,n} \frac{I_n(\alpha^{-1/2}\varepsilon)  I_n(\alpha^{-1/2}\,R) }{I_n(\alpha^{-1/2}\varepsilon)K_n(\alpha^{-1/2}R) - K_n(\alpha^{-1/2}\varepsilon)I_n(\alpha^{-1/2}R)}. $$
    Thus, 
    $$ v_\varepsilon^{h,0}(r, \theta) - v_0^{h,0}(r, \theta) = -\delta v^{h,0}(r) \frac{ I_0(\alpha^{-1/2}\,R) I_0(\alpha^{-1/2}\,\varepsilon) }{I_0(\alpha^{-1/2}\,\varepsilon)K_0(\alpha^{-1/2}\,R) - K_0(\alpha^{-1/2}\,\varepsilon)I_0(\alpha^{-1/2}\,R)} + R_{\varepsilon}(r), $$
    and,
    $$ R_{\varepsilon}(r,\theta) := -\sum_{n\in\mathbb{Z}^*} A_{0,n} \frac{I_n(\alpha^{-1/2}\,\varepsilon)\Big( I_n(\alpha^{-1/2}\,r)K_n(\alpha^{-1/2}\,R) - K_n(\alpha^{-1/2}\,r)I_n(\alpha^{-1/2}\,R) \Big) }{I_n(\alpha^{-1/2}\,\varepsilon)K_n(\alpha^{-1/2}\,R) - K_n(\alpha^{-1/2}\,\varepsilon)I_n(\alpha^{-1/2}\,R)}\,e^{in\theta}. $$
    We use that, \cite{Abramowitz1972},
    \[ K_0(x) = -(\gamma-\ln{2} + \ln{x}) I_0(x) + x r_1(x), \]
    and get,
    $$ v_\varepsilon^{h,0}(r, \theta) - v_0^{h,0}(r, \theta) = -\delta v^{h,0}(r) \Big( M + \ln{\varepsilon} + \varepsilon r_2(\varepsilon) \Big)^{-1} + R_{\varepsilon}(r). $$
    where, \[ M := \frac{K_0(\alpha^{-1/2}R) + I_0(\alpha^{-1/2}R)\big(\gamma-\ln{2} + \ln{(\alpha^{-1/2})} \big)}{I_0(\alpha^{-1/2} R)}, \]
    is a constant independent of $\epsilon$. It remains to show that $R_\varepsilon(r,\theta) = o\left(\frac{-1}{\ln{\varepsilon}}\right)$. We have,
    $$ R_\varepsilon(r,\theta) := \frac{-1}{\ln{\varepsilon}}\Theta_\varepsilon(r,\theta), $$
    where,
    $$ \Theta_\varepsilon(r,\theta) := \sum_{n\in\mathbb{Z}^*} \ln{\varepsilon} \frac{ K_n(\alpha^{-1/2}R)I_n(\alpha^{-1/2}r) - I_n(\alpha^{-1/2}R)K_n(\alpha^{-1/2}r)}{I_n(\alpha^{-1/2}\varepsilon)K_n(\alpha^{-1/2}R) - I_n(\alpha^{-1/2}R)K_n(\alpha^{-1/2}\varepsilon)}I_n(\alpha^{-1/2}\varepsilon)\,A_{0,n}\,e^{in\theta}. $$
    Moreover,
    $$ \big|\Theta_\varepsilon(r,\theta)\big| \leq \sum_{n\in\mathbb{Z}^*} \big|\ln{\varepsilon}\big| \Bigg|\frac{ I_n(\alpha^{-1/2}R)K_n(\alpha^{-1/2}r) - K_n(\alpha^{-1/2}R)I_n(\alpha^{-1/2}r)}{ I_n(\alpha^{-1/2}R)K_n(\alpha^{-1/2}\varepsilon) - I_n(\alpha^{-1/2}\varepsilon)K_n(\alpha^{-1/2}R)}\Bigg|\big|I_n(\alpha^{-1/2}\varepsilon)\big|\,\big|A_{0,n}\big|. $$
    Since $I_n$ is an increasing function and $K_n$ is a decreasing function, we have, for $r\in[\varepsilon,R]$,
    $$ \big|I_n(\alpha^{-1/2}R)K_n(\alpha^{-1/2}r) - K_n(\alpha^{-1/2}R)I_n(\alpha^{-1/2}r)\big| \leq \big|I_n(\alpha^{-1/2}R)K_n(\alpha^{-1/2}\varepsilon) - K_n(\alpha^{-1/2}R)I_n(\alpha^{-1/2}\varepsilon)\big|. $$
    Thus,
    $$ \big|\Theta_\varepsilon(r,\theta)\big| \leq \sum_{n\in\mathbb{Z}^*} \big|\ln{\varepsilon}\big| \big|I_n(\alpha^{-1/2}\varepsilon)\big|\,\big|A_{0,n}\big|. $$
    Finally, using that, $$ I_n(\alpha^{-1/2} \varepsilon) = \frac{1}{n!}\left(\frac{\alpha^{-1/2}}{2}\right)^n \varepsilon^n + \varepsilon^n\,r(\varepsilon), $$
    we have that $\Theta_\varepsilon$ tends to $0$ when $\varepsilon$ goes to $0$.
\end{proof}


Using the previous result and the property of the Wronskian \cite{Abramowitz1972}, we have the following : \\

\begin{proposition} For $h$ in $L^2(B_R)$, we define,
    \[ \delta h(h) := -A_{0,0}\frac{1}{R\,I_0(\alpha^{-1/2}\,R)}. \]
    where $A_{0,0}$ is defined in Proposition \ref{prop:solution-unhomo}. Then, for $\varepsilon$ sufficiently small, we have the following asymptotic estimation,
    \[  \left\|h_\varepsilon - h_0 - \frac{-1}{\ln{\varepsilon}}\delta h\right\|_{-1/2,\partial B_R} = o\left(\frac{-1}{\ln{\varepsilon}}\right). \]
\end{proposition}

Finally, we can derive from the previous proposition the variations of the linear form : \\

\begin{proposition} For $h$ in $L^2(B_R)$, we define,
    \[ \delta l(w) := -\alpha A_{0,0}\frac{w^\text{mean}}{I_0(\alpha^{-1/2} R)}. \]
    Then, for $\varepsilon$ sufficiently small, we have the following asymptotic estimation,
    \[ \left|l_\varepsilon(w) - l_0(w) - \frac{-2\pi}{\ln{\varepsilon}}\delta l(w)\right| = o\left(\frac{-1}{\ln{\varepsilon}}\right)\,\|w\|_{V_R},\ \forall w \in V_R. \]
\end{proposition}

\section{Computation of the topological derivative}
\label{sec:summary-numerical}

We now gather the previous section results to derive the topological derivative. We consider the adjoint problem of Problem \ref{pb:problem:v-D} : \\
\begin{problem} Find $\widetilde{w}_0$ in $H^1(D)$ such that
	\begin{equation}
		\left \{ \begin{array}{cl}
            -\alpha\Delta \widetilde{w}_0 + \widetilde{w}_0 = -\widetilde{v}_0|\widetilde{v}_0|^{p-2}, & \text{in}\ D, \\
            \partial_n \widetilde{w}_0 = 0, & \text{on}\ \partial D. \\
        \end{array} \right .\label{eq:problem:w-D}
	\end{equation}
	\label{pb:problem:w-D}
\end{problem}
\color{black}

Then, we have the following proposition, \\

\begin{proposition}
    $w_0$, solution of Problem \ref{pb:problem:w-epsilon-outside}, is the restriction of $\widetilde{w}_0$ to $D\setminus B_R$.
    \label{prop:w0-wD}
\end{proposition}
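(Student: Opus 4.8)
The plan is to verify that the restriction $\widetilde w_0|_{D\setminus B_R}$ satisfies the variational problem defining $w_0$, namely $a_0(w_0,\varphi) = -DJ_0(v_0)\varphi$ for every $\varphi\in V_R$, and then to conclude by the coercivity (hence uniqueness) part of the standing assumptions on $a_0$. The computation hinges on the self-adjointness of $-\alpha\Delta + \mathrm{Id}$, encoded in the symmetry of the Dirichlet-to-Neumann operator $T_0$. First I would note that $g_s(\cdot,\widetilde v_0)\in L^2(D)$: by Proposition \ref{prop:g} one has $|g_s(x,\widetilde v_0(x))|\leq|g_s(x,0)| + M|\widetilde v_0(x)|$, which is in $L^2(D)$ thanks to (H3) and $\widetilde v_0\in H^1(D)\hookrightarrow L^2(D)$. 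Consequently $\widetilde w_0$, the weak solution of Problem \ref{pb:problem:w-D}, satisfies $\alpha\int_D\nabla\widetilde w_0\cdot\nabla\psi\,dx + \int_D\widetilde w_0\,\psi\,dx = -\int_D g_s(x,\widetilde v_0)\,\psi\,dx$ for all $\psi\in H^1(D)$.

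Given $\varphi\in V_R = H^1(D\setminus B_R)$, I would test the above identity against the extension $\widetilde\varphi\in H^1(D)$ defined by $\widetilde\varphi = \varphi$ on $D\setminus B_R$ and $\widetilde\varphi = v_0^{0,\,\varphi|_{\partial B_R}}$ on $B_R$ (the $(-\alpha\Delta + \mathrm{Id})$-harmonic extension of the trace of $\varphi$, which lies in $H^1(B_R)$ and matches $\varphi$ across $\partial B_R$). On $B_R$, testing the weak formulation of $v_0^{0,\varphi|_{\partial B_R}}$ against $\widetilde w_0|_{B_R}\in H^1(B_R)$ and using the definition $T_0\phi = \nabla v_0^{0,\phi}\cdot n$ together with the symmetry of $T_0$ gives
\[ \alpha\int_{B_R}\nabla\widetilde w_0\cdot\nabla\widetilde\varphi\,dx + \int_{B_R}\widetilde w_0\,\widetilde\varphi\,dx = \alpha\big\langle T_0(\varphi|_{\partial B_R}),\,\widetilde w_0|_{\partial B_R}\big\rangle = \alpha\big\langle T_0(\widetilde w_0|_{\partial B_R}),\,\varphi|_{\partial B_R}\big\rangle . \]
Adding the contribution of $D\setminus B_R$, the left-hand side of the tested identity becomes exactly $a_0(\widetilde w_0|_{D\setminus B_R},\varphi)$.

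For the right-hand side, $-\int_D g_s(x,\widetilde v_0)\widetilde\varphi\,dx = -\int_{D\setminus B_R}g_s(x,v_0)\varphi\,dx - \int_{B_R}g_s(x,\widetilde v_0)\,v_0^{0,\varphi|_{\partial B_R}}\,dx$, where I used $\widetilde v_0 = v_0$ on $D\setminus B_R$ (Proposition \ref{prop:gluing-solutions} at $\varepsilon=0$) and the definition of $\widetilde\varphi$. On the other hand, differentiating $J_0(v) = \widetilde J_0(\widetilde v_0)$ through the affine extension map $v\mapsto\widetilde v_0$ ($=v$ on $D\setminus B_R$, $=v_0^{h,0} + v_0^{0,v|_{\partial B_R}}$ on $B_R$), whose derivative in the direction $\varphi$ is $\varphi$ on $D\setminus B_R$ and $v_0^{0,\varphi|_{\partial B_R}}$ on $B_R$, the chain rule yields precisely $DJ_0(v_0)\varphi = \int_{D\setminus B_R}g_s(x,v_0)\varphi\,dx + \int_{B_R}g_s(x,\widetilde v_0)\,v_0^{0,\varphi|_{\partial B_R}}\,dx$. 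Hence $a_0(\widetilde w_0|_{D\setminus B_R},\varphi) = -DJ_0(v_0)\varphi$ for all $\varphi\in V_R$, and since the defining variational problem for $w_0$ has a unique solution, $w_0 = \widetilde w_0|_{D\setminus B_R}$.

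The step that needs care is the handling of the interface $\partial B_R$: choosing the extension $\widetilde\varphi$ so that the $B_R$-integral collapses onto $\partial B_R$, and invoking the symmetry of $T_0$ to reassemble it as the boundary term of $a_0$; one must also keep in mind that $v_0^{0,\varphi|_{\partial B_R}}$ is only in $H^1(B_R)$, so that ``integration by parts on $B_R$'' must be read as the duality pairing $\langle\cdot,\cdot\rangle$ on $H^{-1/2}(\partial B_R)\times H^{1/2}(\partial B_R)$ built into $T_0$. An alternative route is to verify directly that $\widetilde w_0|_{D\setminus B_R}$ solves the three equations of Problem \ref{pb:problem:w-epsilon-outside}, re-running the truncation of Figure \ref{fig:domain-split} and Proposition \ref{prop:gluing-solutions} with the admissible datum $-g_s(\cdot,\widetilde v_0)\in L^2(D)$ in place of $h$ and $\varepsilon=0$; in that approach the delicate point is instead the transmission condition on $\partial B_R$, which requires interior elliptic regularity to rule out a jump of $\partial_n\widetilde w_0$.
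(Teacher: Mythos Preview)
Your proof is correct and follows essentially the same route as the paper: restrict $\widetilde w_0$ to $D\setminus B_R$, extend the test function $\varphi\in V_R$ to the $(-\alpha\Delta+\mathrm{Id})$-harmonic function on $B_R$, and use that extension together with the symmetry of $T_0$ to identify $a_0(\widetilde w_0|_{D\setminus B_R},\varphi)$ with $\widetilde a_0(\widetilde w_0,\widetilde\varphi)=-D\widetilde J_0(\widetilde v_0)\widetilde\varphi=-DJ_0(v_0)\varphi$, concluding by uniqueness. You are simply more explicit than the paper about the symmetry of $T_0$, the $L^2$-bound on $g_s(\cdot,\widetilde v_0)$, and the chain-rule computation of $DJ_0(v_0)\varphi$.
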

\begin{proof}
    We set $w_R := \widetilde{w}_0|_{D\setminus B_R}$. We have to show that $w_R=w_0$ i.e. $a_0(w_R,\varphi_R) = -DJ_0(v_0)\varphi_R$, $\forall \varphi_R\in V_R$. Let $\varphi_R\in V_R$. We denote $\widetilde{\varphi}\in V_0$ the extension of $\varphi_R$ to $V_0$ such that $-\alpha\Delta\widetilde{\varphi} + \widetilde{\varphi} = 0$ in $B_R$. Thus,
    \begin{align*}
        a_0(w_R,\varphi_R) & = \alpha\int_{D\setminus B_R}\nabla w_R\cdot\nabla\varphi_R\ dx + \alpha\int_{\partial B_R} T_0 w_R\, \varphi_R\ d\sigma + \int_{D\setminus B_R} w_R\,\varphi_R\ dx \\
        & = \alpha\int_{D\setminus B_R}\nabla w_R\cdot\nabla\varphi_R\ dx + \alpha\int_{\partial B_R} T_0 w_R\, \varphi_R\ d\sigma + \int_{D\setminus B_R} w_R\,\varphi_R\ dx + \int_{B_R}\underbrace{(-\alpha\Delta\widetilde{\varphi} + \widetilde{\varphi})}_{=0} \widetilde{w}_0\ dx. \\
        %
        %
        %
        %
    \end{align*}
    And after integrating by parts,
    \begin{align*}
        a_0(w_R,\varphi_R) & = \alpha\int_{D\setminus B_R}\nabla w_R\cdot\nabla\varphi_R\ dx + \int_{D\setminus B_R} w_R\varphi_R\ dx + \alpha\int_{B_R}\nabla\widetilde{\varphi}\cdot\nabla \widetilde{w}_0\ dx + \int_{B_R}\widetilde{\varphi}\,\widetilde{w}_0\ dx \\
        & = \alpha\int_{D}\nabla \widetilde{w}_0\cdot\nabla\widetilde{\varphi}\ dx + \int_{D}\widetilde{w}_0\,\widetilde{\varphi}\ dx \\
        & = \widetilde{a}_0(\widetilde{w}_0,\widetilde{\varphi}) = -D\widetilde{J}_0(v_D)\widetilde{\varphi}.
    \end{align*}
    Moreover, by definition $\widetilde{J}_0(v_D) = J_0(v_R)$, 
    thus, 
    \[ D\widetilde{J}_0(v_D)\widetilde{\varphi} = DJ_0(v_0)\varphi_R. \]
    By uniqueness of the solution, $w_R = w_0$.
\end{proof}

Using that, if $u$ and $v$ are solutions of the linear diffusion equation on $B_R$, $$ v(x_0) = \frac{v^\text{mean}}{I_0(\alpha^{-1/2} R)} \qquad \text{and} \qquad w(x_0) = \frac{w^\text{mean}}{I_0(\alpha^{-1/2} R)}, $$
it follows the topological gradient based on the adjoint method given by: \\

\begin{proposition} For $\varepsilon$ small enough, we have,
    \[ j(K_\varepsilon) - j(K) = 2\alpha\,\widetilde{v}_0(x_0)\,\widetilde{w}_0(x_0)\frac{-2\pi}{\ln{\varepsilon}} + o\left(\frac{-1}{\ln{\varepsilon}}\right), \]
    with $\widetilde{v}_0$ solution of Problem \ref{pb:problem:v-D} and $\widetilde{w}_0$ solution of Problem \ref{pb:problem:w-D}.
\end{proposition}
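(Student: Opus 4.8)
The plan is to deduce the expansion from the abstract adjoint formula, Theorem~\ref{thm:adjoint-method}, applied on the fixed Hilbert space $V_R$ with $\rho(\varepsilon)=\varepsilon^{1/2}$, and then to evaluate the resulting ``correction'' $\delta a(v_0,w_0)-\delta l(w_0)+\delta J(v_0)$ by means of a Green identity on the truncation ball $B_R$.

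\textbf{Step 1 (reduction to the abstract theorem).} I would first note that the ingredients of Theorem~\ref{thm:adjoint-method} are all at hand: the continuity and $\varepsilon$-uniform coercivity of $a_\varepsilon$ on $V_R$ (from the uniform boundedness and nonnegativity of $T_\varepsilon$), the continuity of $l_\varepsilon$, the three remainder estimates with rate $\rho(\varepsilon)=\varepsilon^{1/2}$ established in the expansion propositions of Section~\ref{sec:topological-derivatives}, and the adjoint state $w_0$ of Problem~\ref{pb:problem:w-epsilon-outside}. Then I would identify $j(K_\varepsilon)$ with $J_\varepsilon(v_\varepsilon)$: if $v_\varepsilon$ solves the external Problem~\ref{pb:problem:v-epsilon-outside}, Proposition~\ref{prop:gluing-solutions} shows its glued extension $\widetilde v_\varepsilon$ is exactly the solution of Problem~\ref{pb:problem:u-D} (i.e. $v_{K_\varepsilon}$), so $J_\varepsilon(v_\varepsilon)=\int_{D\setminus B_\varepsilon} g(x,\widetilde v_\varepsilon)\,dx=j(K_\varepsilon)$ and likewise $J_0(v_0)=j(K)$ (the difference between $\int_{D\setminus B_\varepsilon}$ and $\int_D$ is $\int_{B_\varepsilon}g(x,0)\,dx$, which vanishes for the compression cost $g(x,s)=\tfrac1p|s|^p$ and is in any case $o(\varepsilon^{1/2})$). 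Theorem~\ref{thm:adjoint-method} then gives
\[ j(K_\varepsilon)-j(K)=\varepsilon^{1/2}\big(\delta a(v_0,w_0)-\delta l(w_0)+\delta J(v_0)\big)+o(\varepsilon^{1/2}), \]
so everything reduces to computing the bracket.

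\textbf{Step 2 (assembling the correction).} Using the linearity $v_\omega^{h,\phi}=v_\omega^{h,0}+v_\omega^{0,\phi}$ together with $\delta T\phi=-\nabla v_\omega^{0,\phi}\!\cdot n$, $\delta h=\nabla v_\omega^{h,0}\!\cdot n$ and $\delta v^{h,\phi}=-v_\omega^{h,\phi}$, and setting $\Phi:=v_\omega^{h,v_0}$, the three terms collapse into
\[ \delta a(v_0,w_0)-\delta l(w_0)+\delta J(v_0)=-\alpha\!\int_{\partial B_R}(\partial_n\Phi)\,w_0\,d\sigma+\alpha\!\int_{\partial B_R}(\partial_n w_0)\,\Phi\,d\sigma-\int_{B_R} g_s(x,\widetilde v_0)\,\Phi\,dx, \]
$n$ being the outward normal of $B_R$. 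Here I would use Proposition~\ref{prop:w0-wD} to replace $w_0$, $\partial_n w_0$ on $\partial B_R$ by $\widetilde w_0$, $\partial_n\widetilde w_0$, and the $\varepsilon=0$ case of Proposition~\ref{prop:gluing-solutions} to get $v_0^{h,v_0}=\widetilde v_0$ on $B_R$, so that $\Phi(x)=\dfrac{\widetilde v_0(x_0)}{K_0(\alpha^{-1/2})}K_0\!\big(\alpha^{-1/2}|x-x_0|\big)$; finally, since $\widetilde w_0$ solves $-\alpha\Delta\widetilde w_0+\widetilde w_0=-g_s(\cdot,\widetilde v_0)$ on $B_R$, I would rewrite $-\int_{B_R} g_s(x,\widetilde v_0)\Phi\,dx=\int_{B_R}(-\alpha\Delta\widetilde w_0+\widetilde w_0)\Phi\,dx$.

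\textbf{Step 3 (Green's identity and the point residue).} Since $\Phi$ solves $-\alpha\Delta\Phi+\Phi=0$ in $B_R\setminus\{x_0\}$ and is, up to the factor $\widetilde v_0(x_0)/K_0(\alpha^{-1/2})$, proportional to the fundamental solution of $-\alpha\Delta+1$ at $x_0$, I would apply the Green identity for $-\alpha\Delta+1$ to $(\widetilde w_0,\Phi)$ on the punctured domain $B_R\setminus\overline{B_\delta(x_0)}$: the $\partial B_R$ terms it produces cancel exactly the two boundary integrals of Step~2, and as $\delta\to0$ the $\partial B_\delta(x_0)$ terms are handled by the near-origin asymptotics $K_0(z)\sim-\ln z$, $K_0'=-K_1$, $K_1(z)\sim z^{-1}$ and the continuity of $\widetilde w_0$ at $x_0$ — the integral of $\Phi$ against $\partial_n\widetilde w_0$ is $O(\delta\ln\tfrac1\delta)\to0$, while the integral of $\widetilde w_0$ against $\partial_r\Phi$ tends to $2\pi\alpha\,\widetilde v_0(x_0)\widetilde w_0(x_0)/K_0(\alpha^{-1/2})$. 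Hence the bracket equals $c\,\widetilde v_0(x_0)\widetilde w_0(x_0)$ with $c=2\pi\alpha/K_0(\alpha^{-1/2})$, and the claimed expansion follows from Step~1.

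\textbf{Main obstacle.} The delicate part is Step~3: the singularity of $\Phi$ at $x_0$ forces the Green identity onto the punctured ball and requires a careful passage to the limit $\delta\to0$, in which the correct point residue must be isolated from the logarithmic and inverse-power growth of $K_0,K_1$ — this is exactly where the rate $\varepsilon^{1/2}$ and the constant $c$ are determined. A secondary, essentially bookkeeping issue is keeping the orientation of $n$ on $\partial B_R$ consistent so the four boundary integrals cancel in pairs, along with the routine checks that $j(K_\varepsilon)-j(K)$ equals $J_\varepsilon(v_\varepsilon)-J_0(v_0)$ up to $o(\varepsilon^{1/2})$ and that all hypotheses of Theorem~\ref{thm:adjoint-method} hold.
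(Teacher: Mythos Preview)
Your proposal is correct and follows essentially the same route as the paper: reduce to Theorem~\ref{thm:adjoint-method} on $V_R$, collapse $\delta a-\delta l+\delta J$ into the three integrals involving $\Phi=v_\omega^{h,v_0}$, replace $w_0$ by $\widetilde w_0$ via Proposition~\ref{prop:w0-wD}, and exploit that $\Phi$ is a multiple of the fundamental solution of $-\alpha\Delta+\mathrm{Id}$ at $x_0$. The only difference is cosmetic: where you excise $B_\delta(x_0)$ and pass to the limit using the $K_0,K_1$ asymptotics, the paper integrates by parts twice on all of $B_R$ and invokes the distributional identity $-\alpha\Delta E+E=\delta_{x_0}$ directly --- your version is simply the rigorous unpacking of that step, and yields the same constant $c=1/E(1)$ (your stated value $2\pi\alpha/K_0(\alpha^{-1/2})$ carries an extra $\alpha$, but the proposition only asserts existence of some $c$).
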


We notice that with this expansion, we get the main theoretical result of the paper which might be summarized as follows: to minimize the $L^p$-error between an image and its reconstruction from linear diffusion inpainting, we have to keep in the mask the pixels $x_0$ which minimize the product $\widetilde{v}_0(x_0)\,\widetilde{w}_0(x_0)$. Such an analytic result gives a soft threshold criterion for the selection of $K$. In fact, the adjoint state is a obtained by solving a linear PDE which is in our setting and, by the elliptic regularity $H^2$ (the right hand side is $L^2$ even when $p=1$ with the regularization adopted in the article), we obtain a continuous solution.
The distribution of this function measures the influence of a considered single pixel and its neighborhood in the cost variations. This is what we call soft-threshold (opposite to hard threshold with pixel-wise asymptotic). 
Moreover, choosing the best pixels this way may be also enhanced by halftoning techniques \cite{Ulichney1987} which increases the quality of the set selection. 
The implementation of the algorithms is done in Python and can be found at \cite{JacuminDec2023}.


    
    





\section{Numerical Results}
\label{sec:numerical-results}

In this section we present some  numerical results when the cost functional is the $L^1$-error and the $L^2$-error, respectively, as they are the most representative for noise in practice. In fact, we take these specific values of $p$, depending on the nature of the noise considered.
The reconstruction step is performed by solving the heat equation with the semi-implicit discrete scheme and $\alpha$ is the time step.
Let us denote by $f$ the original image, $f_\delta$ the noisy one, and we denote by $u$ the reconstructed image. We emphasis that the inpainting masks are built from $f_\delta$, that is to say $f_\delta$ is available in $D$ during the mask selection step while the data for the reconstruction are only available in $K$. 
We denote by \textit{Lp-ADJ-T} the algorithm using the adjoint method by selecting the pixels given by $-\widetilde{v}_0\,\widetilde{w}_0$ and we denote \textit{Lp-ADJ-H} the algortihm combining with a halftoning technique (see \cite{Ulichney1987, Floyd1976})).
For comparisons purpose, we consider \textit{H1-T} and \textit{H1-H} which correspond to the mask selection following the asymptotic expansion given in \cite{Belhachmi2009}, and where we take directly as criterion the hard/soft-thresholding of $|\Delta f_\delta|$ (see. \cite{Belhachmi2009}-\cite{Belhachmi2022Jun}).

\subsection{Salt and Pepper Noise}

A common way to deal with impulse noise like salt and pepper, is to minimize the $L^1$-error \cite{Nikolova2002, Nikolova2004}. In our algorithm, we use $p=1.01$. We give in Table \ref{tab:methods-comparison:l1:0.05}, Table \ref{tab:methods-comparison:l1:0.1} and Table \ref{tab:methods-comparison:l1:0.15} the $L^1$-error for the methods described above and several amounts of salt and/or pepper noise.

We notice that \textit{L1-ADJ-H} gives the lower $L^1$-error. In fact, the impulse noises induce a high laplacian at the location of the corrupted pixels, thus satisfy the criterion for these methods. On the other hand, the adjoint state $\widetilde{w}_0$ and $\widetilde{v}_0$ are respectively solutions to a linear PDE (i.e. $A(z)=-\alpha \Delta z+z$), with $-\vert \widetilde{v}_0\vert$ and $\Delta f$ as second members, so that they give smooth distribution (e.g. formally $\widetilde{v}_0=A^{-1}(\Delta f)$). 
In addition, and for the same reason, in the \textit{L1-ADJ-} masks, we can distinguish the edges of the image, while its not the case with the \textit{H1-} methods, so that the asymptotic given by the adjoint method is more edge-preserving. Interestingly, the \textit{L1-ADJ-H} method gives also better visual results than the \textit{H1-H} method when the image is free from any noise.

In Figure \ref{fig:sp:0-0:l1:0.1}, Figure \ref{fig:sp:0.02-0:l1:0.1}, Figure \ref{fig:sp:0-0.02:l1:0.1} and Figure \ref{fig:sp:0.01-0.01:l1:0.1}, the resulting masks and reconstruction are given for different level of noise. We observe that most of the corrupted pixels are not selected in $K$ for the \textit{L1-ADJ-} methods, while they are selected in the case of \textit{H1-}ones.

\begin{table}[H]
    \centering
    \begin{tabular}{|c|c||c|c||c|c||c||c|}
        \hline
        \multicolumn{2}{|c||}{\textbf{Noise}} & \multicolumn{2}{c||}{\textbf{L1-ADJ-T}} & \multicolumn{2}{c||}{\textbf{L1-ADJ-H}} & \textbf{H1-T} & \textbf{H1-H} \\
        \hline
        Salt & Pepper & $\alpha$ & $\|f-u\|_1$ & $\alpha$ & $\|f-u\|_1$ & $\|f-u\|_1$ & $\|f-u\|_1$ \\
        \hhline{|========|}
        0 & 0 & 0.01 & 7232.60 & 3.62 & \textbf{1631.97} & 8100.57 & 1961.39 \\
        0.02 & 0 & 2.67 & 4390.30 & 1.01 & \textbf{1936.35} & 13128.15 & 11086.31 \\
        0 & 0.02 & 1.47 & 3886.39 & 0.96 & \textbf{1819.94} & 15656.18 & 12950.13 \\
        0.01 & 0.01 & 0.46 & 3783.87 & 0.76 & \textbf{2006.60} & 7086.75 & 6591.59 \\
        0.04 & 0 & 2.67 & 5355.38 & 0.56 & \textbf{2327.53} & 25753.78 & 20149.18 \\
        0 & 0.04 & 1.61 & 4395.32 & 1.72 & \textbf{2176.40} & 30912.89 & 24934.66 \\
        0.02 & 0.02 & 0.41 & 4782.36 & 0.71 & \textbf{3035.47} & 13596.56 & 12835.65 \\
        0.1 & 0 & 3.02 & 9694.11 & 0.56 & \textbf{2937.17} & 30139.92 & 27912.42 \\
        0 & 0.1 & 5.38 & 6336.82 & 0.56 & \textbf{2757.27} & 35384.75 & 33442.85 \\
        0.05 & 0.05 & 0.36 & 7756.14 & 0.41 & \textbf{5639.58} & 37021.45 & 28992.83 \\
        \hline
    \end{tabular} \vspace{0.2cm}
    
    \caption{$L^1$-error between the original image $f$ and the reconstruction $u$ (built from  $f_\delta$)  with $5\%$ of total pixels saved.}
    \label{tab:methods-comparison:l1:0.05}
\end{table}

\begin{table}[H]
    \centering
    \begin{tabular}{|c|c||c|c||c|c||c||c|}
        \hline
        \multicolumn{2}{|c||}{\textbf{Noise}} & \multicolumn{2}{c||}{\textbf{L1-ADJ-T}} & \multicolumn{2}{c||}{\textbf{L1-ADJ-H}} & \textbf{H1-T} & \textbf{H1-H} \\
        \hline
        Salt & Pepper & $\alpha$ & $\|f-u\|_1$ & $\alpha$ & $\|f-u\|_1$ & $\|f-u\|_1$ & $\|f-u\|_1$ \\
        \hhline{|========|}
        0 & 0 & 0.01 & 4241.43 & 2.27 & \textbf{918.08} & 4648.99 & 961.01 \\
        0.02 & 0 & 0.41 & 3195.78 & 0.66 & \textbf{1220.02} & 6531.56 & 3875.63 \\
        0 & 0.02 & 0.36 & 2496.19 & 0.71 & \textbf{1218.38} & 5461.22 & 4019.73 \\
        0.01 & 0.01 & 0.56 & 2241.62 & 0.76 & \textbf{1318.16} & 4573.62 & 3097.18 \\
        0.04 & 0 & 0.36 & 4073.13 & 0.56 & \textbf{1435.63} & 12381.64 & 9611.86 \\
        0 & 0.04 & 2.07 & 2947.03 & 0.56 & \textbf{1444.24} & 15171.93 & 11195.98 \\
        0.02 & 0.02 & 0.46 & 2963.49 & 0.61 & \textbf{2057.61} & 6654.28 & 5901.86 \\
        0.1 & 0 & 0.26 & 8332.90 & 0.51 & \textbf{1807.20} & 28163.84 & 22580.17 \\
        0 & 0.1 & 2.42 & 4318.63 & 0.51 & \textbf{1852.08} & 34035.09 & 27595.06 \\
        0.05 & 0.05 & 0.36 & 6176.51 & 0.51 & \textbf{4796.29} & 17716.39 & 15806.10 \\
        \hline
    \end{tabular} \vspace{0.2cm}
    
    \caption{$L^1$-error between the original image $f$ and the reconstruction $u$ (built from  $f_\delta$)  with $10\%$ of total pixels saved.}
    \label{tab:methods-comparison:l1:0.1}
\end{table}

\begin{table}[H]
    \centering
    \begin{tabular}{|c|c||c|c||c|c||c||c|}
        \hline
        \multicolumn{2}{|c||}{\textbf{Noise}} & \multicolumn{2}{c||}{\textbf{L1-ADJ-T}} & \multicolumn{2}{c||}{\textbf{L1-ADJ-H}} & \textbf{H1-T} & \textbf{H1-H} \\
        \hline
        Salt & Pepper & $\alpha$ & $\|f-u\|_1$ & $\alpha$ & $\|f-u\|_1$ & $\|f-u\|_1$ & $\|f-u\|_1$ \\
        \hhline{|========|}
        0 & 0 & 0.01 & 2004.71 & 1.16 & 629.31 & 3069.92 & \textbf{629.18} \\
        0.02 & 0 & 0.46 & 1999.43 & 0.61 & \textbf{910.40} & 3681.41 & 2169.59 \\
        0 & 0.02 & 0.41 & 1612.94 & 0.71 & \textbf{940.36} & 3453.97 & 2263.76 \\
        0.01 & 0.01 & 0.61 & 1645.00 & 0.61 & \textbf{1002.72} & 2936.01 & 2057.86 \\
        0.04 & 0 & 0.41 & 2637.04 & 0.61 & \textbf{1142.34} & 7601.93 & 5195.02 \\
        0 & 0.04 & 0.36 & 2221.72 & 0.71 & \textbf{1148.26} & 7299.16 & 5652.56 \\
        0.02 & 0.02 & 0.51 & 2197.91 & 0.56 & \textbf{1656.26} & 5063.83 & 4101.42 \\
        0.1 & 0 & 0.31 & 5985.97 & 0.56 & \textbf{1459.34} & 21270.53 & 16461.51 \\
        0 & 0.1 & 0.36 & 4068.65 & 0.51 & \textbf{1497.94} & 26678.66 & 20173.49 \\
        0.05 & 0.05 & 0.41 & 4999.00 & 0.51 & \textbf{4098.81} & 11612.23 & 10171.57 \\
        \hline
    \end{tabular} \vspace{0.2cm}
    
    \caption{$L^1$-error between the original image $f$ and the reconstruction $u$ (built from  $f_\delta$)  with $15\%$ of total pixels saved.}
    \label{tab:methods-comparison:l1:0.15}
\end{table}

\begin{figure}[H]
	\centering
	\subfloat[Without noise.]{
		\includegraphics[width=3.6cm]{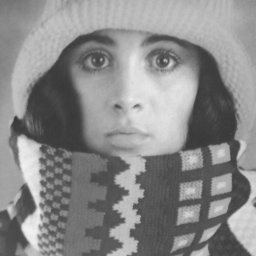}
	}
	\quad
	\subfloat[With $2\%$ of salt noise.]{
		\includegraphics[width=3.6cm]{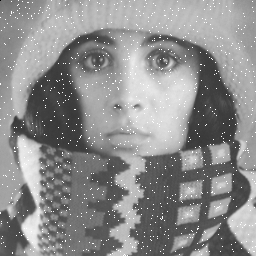}
	}
	\quad
	\subfloat[With $2\%$ of pepper noise.]{
		\includegraphics[width=3.6cm]{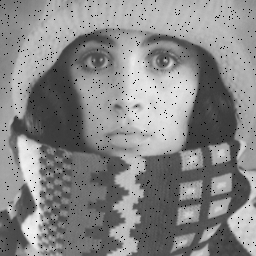}
	}
	\quad
	\subfloat[With $2\%$ of salt and pepper noise.]{
		\includegraphics[width=3.6cm]{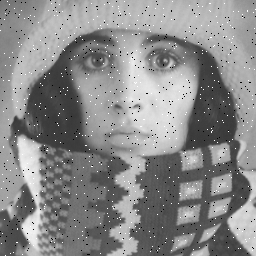}
	}
	\caption{Input images.}
\end{figure}

\begin{figure}[H]
	\centering
	\subfloat[Mask L1-ADJ-T.]{
		\includegraphics[width=3.6cm]{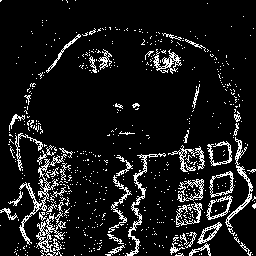}
	}
	\quad
	\subfloat[Reconstruction with L1-ADJ-T.]{
		\includegraphics[width=3.6cm]{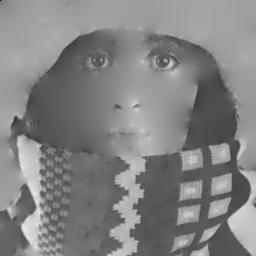}
	}
	\quad
	\subfloat[Mask L1-ADJ-H.]{
		\includegraphics[width=3.6cm]{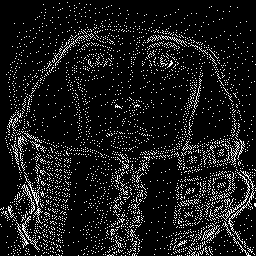}
	}
	\quad
	\subfloat[Reconstruction with L1-ADJ-H.]{
		\includegraphics[width=3.6cm]{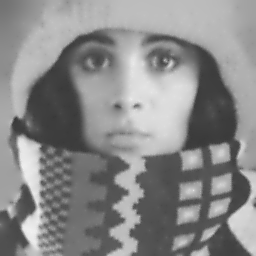}
	}
\end{figure}
\begin{figure}[H]
	\centering
	\subfloat[Mask H1-T.]{
		\includegraphics[width=3.6cm]{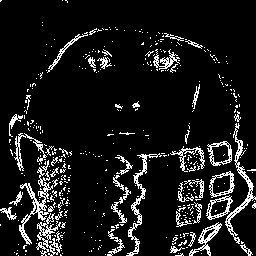}
	}
	\quad
	\subfloat[Reconstruction with H1-T.]{
		\includegraphics[width=3.6cm]{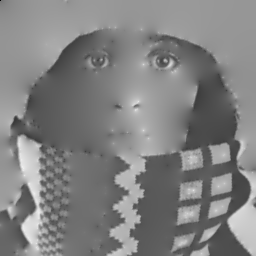}
	}
	\quad
	\subfloat[Mask H1-H.]{
		\includegraphics[width=3.6cm]{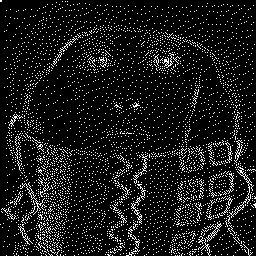}
	}
	\quad
	\subfloat[Reconstruction with H1-H.]{
		\includegraphics[width=3.6cm]{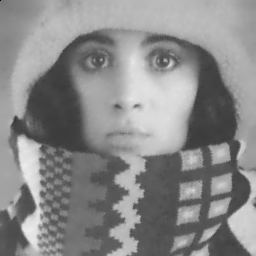}
	}
	\caption{Masks and reconstructions from image without noise and with $10\%$ of total pixels saved.}
	\label{fig:sp:0-0:l1:0.1}
\end{figure}

\begin{figure}[H]
	\centering
	\subfloat[Mask L1-ADJ-T.]{
		\includegraphics[width=3.6cm]{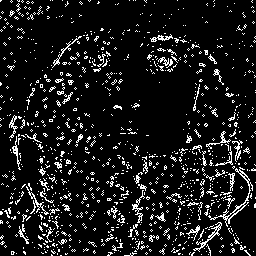}
	}
	\quad
	\subfloat[Reconstruction with L1-ADJ-T.]{
		\includegraphics[width=3.6cm]{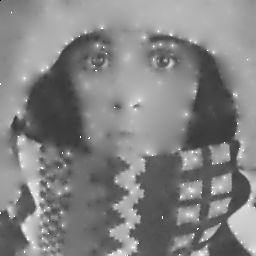}
	}
	\quad
	\subfloat[Mask L1-ADJ-H.]{
		\includegraphics[width=3.6cm]{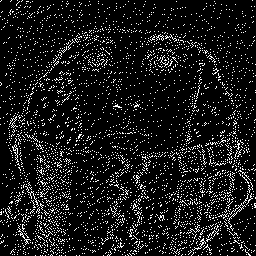}
	}
	\quad
	\subfloat[Reconstruction with L1-ADJ-H.]{
		\includegraphics[width=3.6cm]{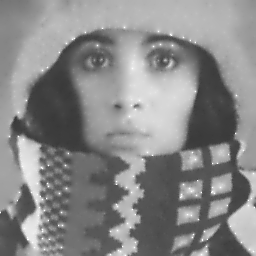}
	}
\end{figure}
\begin{figure}[H]
	\centering
	\subfloat[Mask H1-T.]{
		\includegraphics[width=3.6cm]{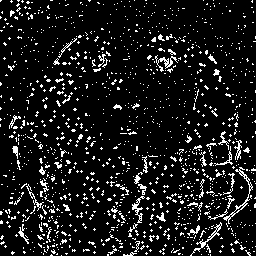}
	}
	\quad
	\subfloat[Reconstruction with H1-T.]{
		\includegraphics[width=3.6cm]{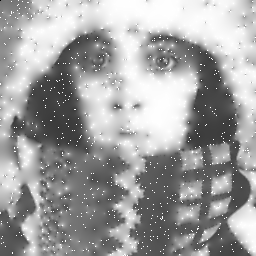}
	}
	\quad
	\subfloat[Mask H1-H.]{
		\includegraphics[width=3.6cm]{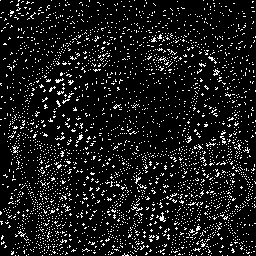}
	}
	\quad
	\subfloat[Reconstruction with H1-H.]{
		\includegraphics[width=3.6cm]{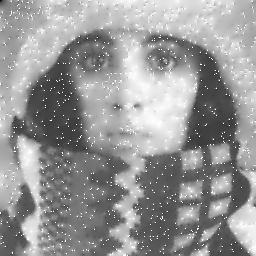}
	}
	\caption{Masks and reconstructions from image with $2\%$ of salt noise and with $10\%$ of total pixels saved.}
	\label{fig:sp:0.02-0:l1:0.1}
\end{figure}

\begin{figure}[H]
	\centering
	\subfloat[Mask L1-ADJ-T.]{
		\includegraphics[width=3.6cm]{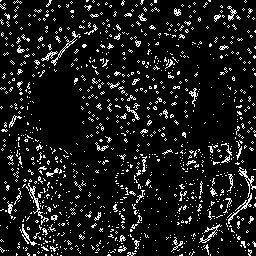}
	}
	\quad
	\subfloat[Reconstruction with L1-ADJ-T.]{
		\includegraphics[width=3.6cm]{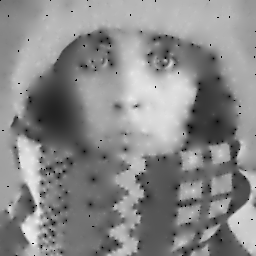}
	}
	\quad
	\subfloat[Mask L1-ADJ-H.]{
		\includegraphics[width=3.6cm]{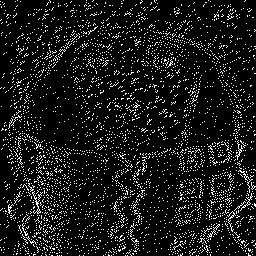}
	}
	\quad
	\subfloat[Reconstruction with L1-ADJ-H.]{
		\includegraphics[width=3.6cm]{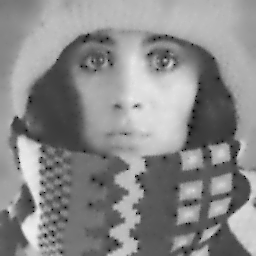}
	}
\end{figure}
\begin{figure}[H]
	\centering
	\subfloat[Mask H1-T.]{
		\includegraphics[width=3.6cm]{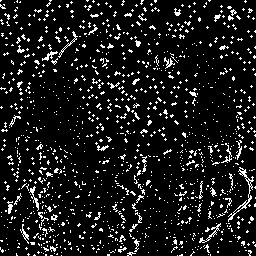}
	}
	\quad
	\subfloat[Reconstruction with H1-T.]{
		\includegraphics[width=3.6cm]{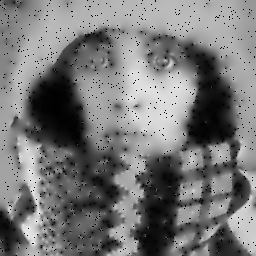}
	}
	\quad
	\subfloat[Mask H1-H.]{
		\includegraphics[width=3.6cm]{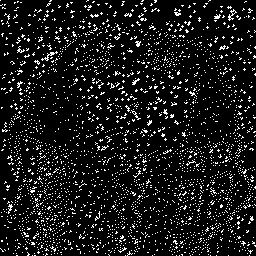}
	}
	\quad
	\subfloat[Reconstruction with H1-H.]{
		\includegraphics[width=3.6cm]{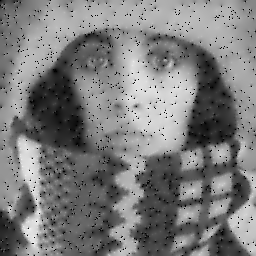}
	}
	\caption{Masks and reconstructions from image with $2\%$ of pepper noise and with $10\%$ of total pixels saved.}
	\label{fig:sp:0-0.02:l1:0.1}
\end{figure}

\begin{figure}[H]
	\centering
	\subfloat[Mask L1-ADJ-T.]{
		\includegraphics[width=3.6cm]{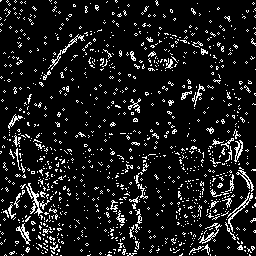}
	}
	\quad
	\subfloat[Reconstruction with L1-ADJ-T.]{
		\includegraphics[width=3.6cm]{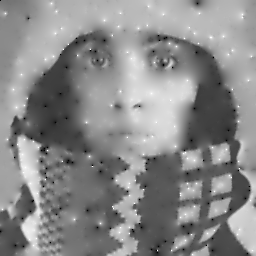}
	}
	\quad
	\subfloat[Mask L1-ADJ-H.]{
		\includegraphics[width=3.6cm]{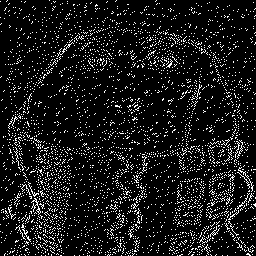}
	}
	\quad
	\subfloat[Reconstruction with L1-ADJ-H.]{
		\includegraphics[width=3.6cm]{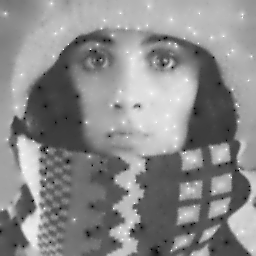}
	}
\end{figure}
\begin{figure}[H]
	\centering
	\subfloat[Mask H1-T.]{
		\includegraphics[width=3.6cm]{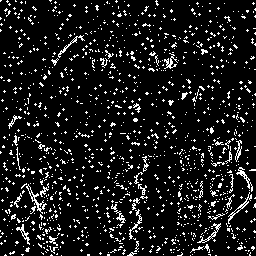}
	}
	\quad
	\subfloat[Reconstruction with H1-T.]{
		\includegraphics[width=3.6cm]{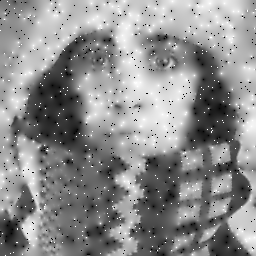}
	}
	\quad
	\subfloat[Mask H1-H.]{
		\includegraphics[width=3.6cm]{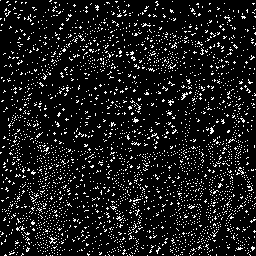}
	}
	\quad
	\subfloat[Reconstruction with H1-H.]{
		\includegraphics[width=3.6cm]{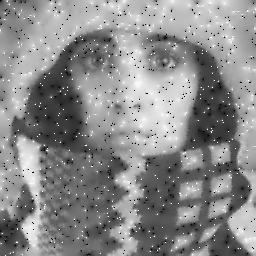}
	}
	\caption{Masks and reconstructions from image with $2\%$ of salt and pepper noise and with $10\%$ of total pixels saved.}
	\label{fig:sp:0.01-0.01:l1:0.1}
\end{figure}

\subsection{Gaussian Noise}

Now, we consider images with gaussian noise. In this case we take $p=2$, and although the algorithms which are not based on the adjoint method perform well, we notice that this method gives better results again.
We give in Table \ref{tab:methods-comparison:l2:0.05}, Table \ref{tab:methods-comparison:l2:0.1} and Table \ref{tab:methods-comparison:l2:0.15} the $L^2$-error for the methods \textit{L2-ADJ-T}, \textit{L2-ADJ-H}, \textit{H1-T} and \textit{H1-H} with respect to the deviation $\sigma>0$ of gaussian noise. 
Formally, the criterion $-\widetilde{v}_0\,\widetilde{w}_0$ is close to $|\Delta f|^2$ which is similar to the result found in \cite{Belhachmi2022Jun}, but the fact that the adjoint state $w$ and primal variable $v$ are computed by solving linear PDEs improves distribution of the topological derivative. We see that for a reasonable level of noise, the \textit{L2-ADJ-H} gives lower $L^2$-error and that the reconstructed image seems to have less noise than the original one. Similarly to \textit{L1-ADJ-}methods, we can distinguish the edges of the image in the \textit{L2-ADJ-}masks, while its not the case with the \textit{H1-}methods.

We plot in Figure \ref{fig:w:0:l2:0.1}, Figure \ref{fig:w:0.03:l2:0.1} and Figure \ref{fig:w:0.05:l2:0.1} the resulting masks and reconstruction from various noise level.

\begin{table}[H]
    \centering
    \begin{tabular}{|c||c|c||c|c||c||c|}
        \hline
        \textbf{Noise} & \multicolumn{2}{c||}{\textbf{L2-ADJ-T}} & \multicolumn{2}{c||}{\textbf{L2-ADJ-H}} & \textbf{H1-T} & \textbf{H1-H} \\
        \hline
        $\sigma$ & $\alpha$ & $\|f-u\|_2$ & $\alpha$ & $\|f-u\|_2$ & $\|f-u\|_2$ & $\|f-u\|_2$ \\
        \hhline{|=======|}
        0    & 0.01 & 35.02 & 2.62 & 16.98 & 39.17 & \textbf{9.78} \\
        0.03 & 0.31 & 13.88 & 1.37 & \textbf{9.53} & 13.76 & 12.33 \\
        0.05 & 0.66 & 15.18 & 2.07 & \textbf{12.51} & 17.01 & 15.49 \\
        0.1 & 1.16 & 30.48 & 1.81 & \textbf{23.57} & 31.48 & 23.99 \\
        0.2 & 0.01 & 67.51 & 0.01 & 52.99 & 77.29 & \textbf{42.16} \\
        \hline
    \end{tabular} \vspace{0.2cm}
    
    \caption{$L^2$-error between the original image $f$ and the reconstruction $u$ (built from  $f_\delta$)  with $5\%$ of total pixels saved.}
    \label{tab:methods-comparison:l2:0.05}
\end{table}

\begin{table}[H]
    \centering
    \begin{tabular}{|c||c|c||c|c||c||c|}
        \hline
        \textbf{Noise} & \multicolumn{2}{c||}{\textbf{L2-ADJ-T}} & \multicolumn{2}{c||}{\textbf{L2-ADJ-H}} & \textbf{H1-T} & \textbf{H1-H} \\
        \hline
        $\sigma$ & $\alpha$ & $\|f-u\|_2$ & $\alpha$ & $\|f-u\|_2$ & $\|f-u\|_2$ & $\|f-u\|_2$ \\
        \hhline{|=======|}
        0    & 0.01 & 23.08 & 0.01 & 9.70 & 25.57 & \textbf{4.99} \\
        0.03 & 0.71 &  9.38 & 0.96 & \textbf{7.91} & 9.23 & 8.57 \\
        0.05 & 0.86 & 13.25 & 0.76 & \textbf{12.39} & 13.78 & 12.55 \\
        0.1  & 0.71 & 26.95 & 0.66 & 24.47 & 27.15 & \textbf{22.95} \\
        0.2  & 0.01 & 56.08 & 2.27 & 46.46 & 61.94 & \textbf{42.62} \\
        \hline
    \end{tabular} \vspace{0.2cm}
    
    \caption{$L^2$-error between the original image $f$ and the reconstruction $u$ (built from $f_\delta$)  with $10\%$ of total pixels saved.}
    \label{tab:methods-comparison:l2:0.1}
\end{table}

\begin{table}[H]
    \centering
    \begin{tabular}{|c||c|c||c|c||c||c|}
        \hline
        \textbf{Noise} & \multicolumn{2}{c||}{\textbf{L2-ADJ-T}} & \multicolumn{2}{c||}{\textbf{L2-ADJ-H}} & \textbf{H1-T} & \textbf{H1-H} \\
        \hline
        $\sigma$ & $\alpha$ & $\|f-u\|_2$ & $\alpha$ & $\|f-u\|_2$ & $\|f-u\|_2$ & $\|f-u\|_2$ \\
        \hhline{|=======|}
        0    & 0.01 & 11.62 & 0.01 & 6.58 & 18.21 & \textbf{3.35} \\
        0.03 & 0.71 &  8.25 & 0.56 & \textbf{7.69} & 8.14 & 7.64 \\
        0.05 & 0.51 & 12.79 & 0.66 & \textbf{12.23} & 13.00 & 11.91 \\
        0.1  & 0.31 & 25.95 & 0.76 & 24.32 & 25.89 & \textbf{22.98} \\
        0.2  & 0.01 & 51.20 & 1.11 & 46.93 & 54.71 & \textbf{43.29}  \\
        \hline
    \end{tabular} \vspace{0.2cm}
    
    \caption{$L^2$-error between the original image $f$ and the reconstruction $u$ (built from  $f_\delta$)  with $15\%$ of total pixels saved.}
    \label{tab:methods-comparison:l2:0.15}
\end{table}

\begin{figure}[H]
	\centering
	\subfloat[Without noise.]{
		\includegraphics[width=3.6cm]{images/Trui.png}
	}
	\quad
	\subfloat[$\sigma=0.03$.]{
		\includegraphics[width=3.6cm]{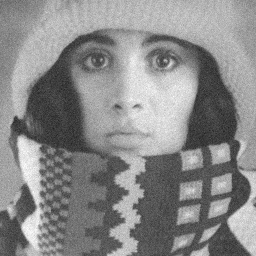}
	}
	\quad
	\subfloat[$\sigma=0.05$.]{
		\includegraphics[width=3.6cm]{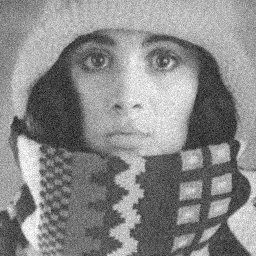}
	}
	\quad
	\subfloat[$\sigma=0.1$.]{
		\includegraphics[width=3.6cm]{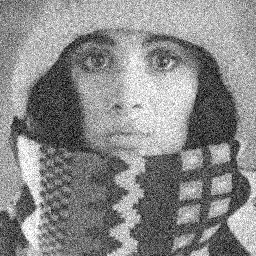}
	}
	\caption{Input images $f_\delta$ with gaussian noise of deviation $\sigma$.}
\end{figure}

\begin{figure}[H]
	\centering
	\subfloat[Mask L2-ADJ-T.]{
		\includegraphics[width=3.6cm]{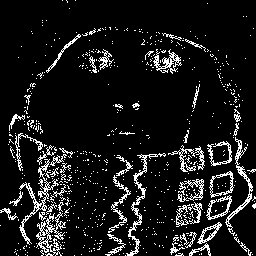}
	}
	\quad
	\subfloat[Reconstruction with L2-ADJ-T.]{
		\includegraphics[width=3.6cm]{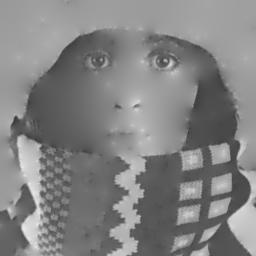}
	}
	\quad
	\subfloat[Mask L2-ADJ-H.]{
		\includegraphics[width=3.6cm]{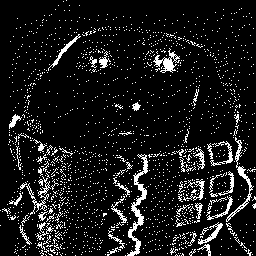}
	}
	\quad
	\subfloat[Reconstruction with L2-ADJ-H.]{
		\includegraphics[width=3.6cm]{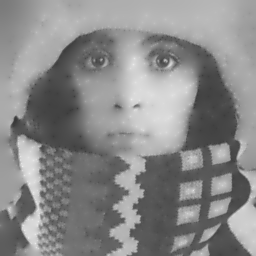}
	}
\end{figure}
\begin{figure}[H]
	\centering
	\subfloat[Mask H1-T.]{
		\includegraphics[width=3.6cm]{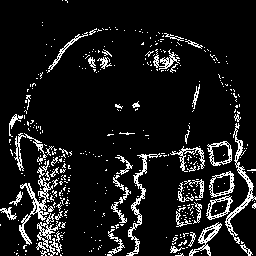}
	}
	\quad
	\subfloat[Reconstruction with H1-T.]{
		\includegraphics[width=3.6cm]{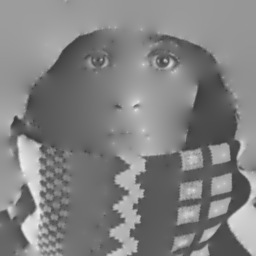}
	}
	\quad
	\subfloat[Mask H1-H.]{
		\includegraphics[width=3.6cm]{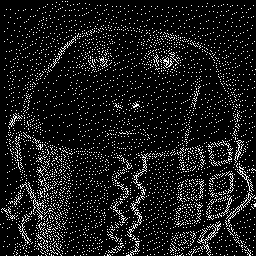}
	}
	\quad
	\subfloat[Reconstruction with H1-H.]{
		\includegraphics[width=3.6cm]{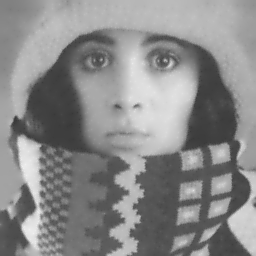}
	}
	\caption{Masks and reconstructions from image without noise and with $10\%$ of total pixels saved.}
	
	\label{fig:w:0:l2:0.1}
\end{figure}

\begin{figure}[H]
	\centering
	\subfloat[Mask L2-ADJ-T.]{
		\includegraphics[width=3.6cm]{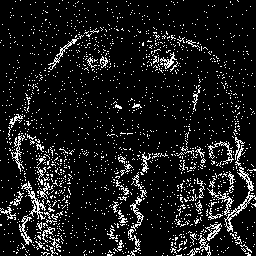}
	}
	\quad
	\subfloat[Reconstruction with L2-ADJ-T.]{
		\includegraphics[width=3.6cm]{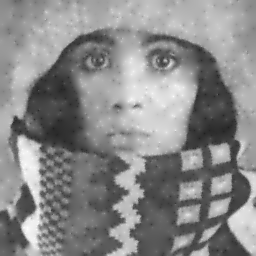}
	}
	\quad
	\subfloat[Mask L2-ADJ-H.]{
		\includegraphics[width=3.6cm]{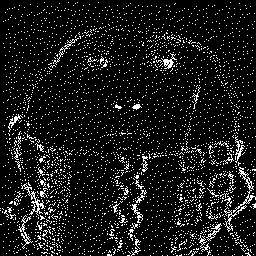}
	}
	\quad
	\subfloat[Reconstruction with L2-ADJ-H.]{
		\includegraphics[width=3.6cm]{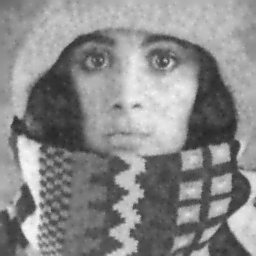}
	}
\end{figure}
\begin{figure}[H]
	\centering
	\subfloat[Mask H1-T.]{
		\includegraphics[width=3.6cm]{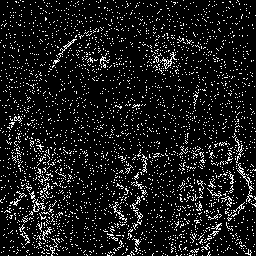}
	}
	\quad
	\subfloat[Reconstruction with H1-T.]{
		\includegraphics[width=3.6cm]{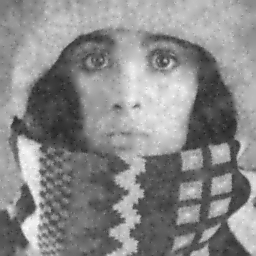}
	}
	\quad
	\subfloat[Mask H1-H.]{
		\includegraphics[width=3.6cm]{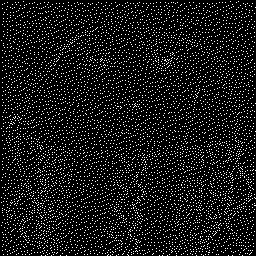}
	}
	\quad
	\subfloat[Reconstruction with H1-H.]{
		\includegraphics[width=3.6cm]{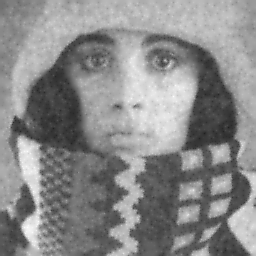}
	}
	\caption{Masks and reconstructions from image with gaussian noise of deviation $\sigma=0.03$ and with $10\%$ of total pixels saved.}
	\label{fig:w:0.03:l2:0.1}
\end{figure}

\begin{figure}[H]
	\centering
	\subfloat[Mask L2-ADJ-T.]{
		\includegraphics[width=3.6cm]{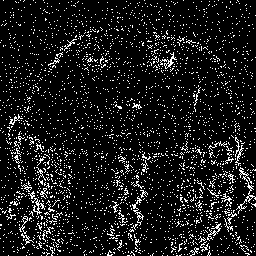}
	}
	\quad
	\subfloat[Reconstruction with L2-ADJ-T.]{
		\includegraphics[width=3.6cm]{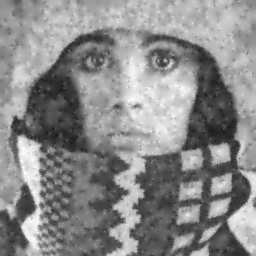}
	}
	\quad
	\subfloat[Mask L2-ADJ-H.]{
		\includegraphics[width=3.6cm]{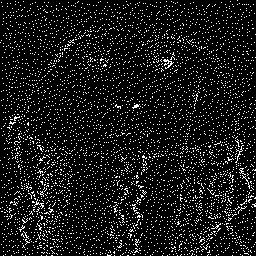}
	}
	\quad
	\subfloat[Reconstruction with L2-ADJ-H.]{
		\includegraphics[width=3.6cm]{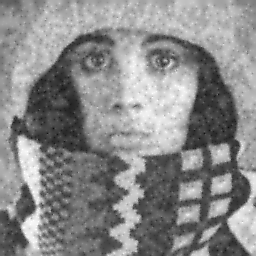}
	}
\end{figure}
\begin{figure}[H]
	\centering
	\subfloat[Mask H1-T.]{
		\includegraphics[width=3.6cm]{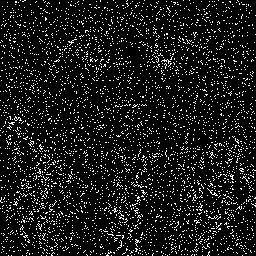}
	}
	\quad
	\subfloat[Reconstruction with H1-T.]{
		\includegraphics[width=3.6cm]{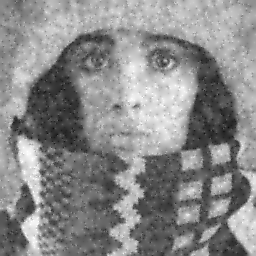}
	}
	\quad
	\subfloat[Mask H1-H.]{
		\includegraphics[width=3.6cm]{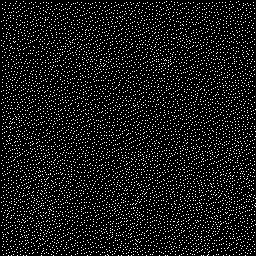}
	}
	\quad
	\subfloat[Reconstruction with H1-H.]{
		\includegraphics[width=3.6cm]{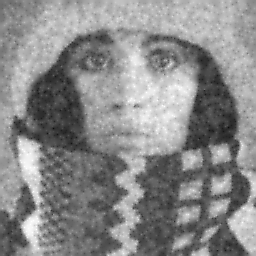}
	}
	\caption{Masks and reconstructions from image with gaussian noise of deviation $\sigma=0.1$ and with $10\%$ of total pixels saved.}
	\label{fig:w:0.05:l2:0.1}
\end{figure}

\section*{Conclusion and Discussions}

In this article, we have formulated the PDE-based compression problem as a shape optimization one, and we have performed the topological expansion for the optimality condition by the adjoint method. Following the approaches of \cite{Amstutz2006} and \cite{Garreau2001}, we compute the asymptotic development for variations of the cost functionals with general exponents $p> 1$, which leads to an analytic soft threshold criterion to select relevant pixels of the mask. The inpainting from the masks to reconstruct the images is performed with a Laplacian, but all the approach may be extended without significant changes to more involved linear operator of second order. Moreover, it can be extended to other (linear and non linear) elliptic operators, and other form of insertions (not necessarily discs) at the price of some technicalities and computations details. 
Finally, we presented some numerical experiments in the case of the $L^2$-error and of a regularized $L^1$-error. It appears that this method for selecting the mask  outperforms the other expansions when the image to compress contains gaussian noise or impulse noise and is easy to implement with a reasonable cost, the adjoint problem is linear even if the operator for the reconstruction is nonlinear.

\bibliographystyle{plainurl}
\bibliography{main}

\appendix
\newpage\section{Additional Proofs}
\label{appendix:properties-proofs}

Here, we give some proofs of results stated in the paper.

\end{document}